\newtheorem{theorem}{Theorem}[section]
\newtheorem{lemma}[theorem]{Lemma}
\newenvironment{proof}[1][Proof]{\textbf{#1.} }
{\ \rule{0.75em}{0.75em}\smallskip}
\newenvironment{@abssec}[1]{%
     \if@twocolumn
       \section*{#1}%
     \else
       \vspace{.05in}\footnotesize
       \parindent .2in
         {\upshape\bfseries #1. }\ignorespaces
     \fi}
     {\if@twocolumn\else\par\vspace{.1in}\fi}
\newcommand{\TheTitle}{High-Order Extended Finite Element Methods for Solving Interface Problems}
\newcommand{\email}[1]{\protect\href{mailto:#1}{#1}}
\newcommand\keywordsname{Key words}
\newcommand\AMSname{AMS subject classifications}
\newenvironment{keywords}{\begin{@abssec}{\keywordsname}}{\end{@abssec}}
\newenvironment{AMS}{\begin{@abssec}{\AMSname}}{\end{@abssec}}
\title{{\TheTitle}\thanks{The first and third author is supported in part by the U.S. Department of Energy, Office of Science, Office of Advanced Scientific Computing Research as part of the Collaboratory on Mathematics for Mesoscopic Modeling of Materials under contract number DE-SC0009249. The second author is supported in part by the Fundamental Research Funds for the Central Universities under grant 1118020303 and China NSF under the grant 11101208.}}
\author{
  Fei Wang\thanks{Department of Mathematics, Pennsylvania State University, State College, PA
    (\email{feiwang.psu@gmail.com}, \url{http://www.personal.psu.edu/fuw7}).}
  \and
  Yuanming Xiao\thanks{Department of Mathematics, Nanjing University, Jiangsu, 210093, P.R. China. (\email{xym@nju.edu.cn}). }
  \and
  Jinchao Xu\thanks{Department of Mathematics, Pennsylvania State University, State College, PA
    (\email{jinchao@psu.edu}, \url{http://www.math.psu.edu/xu}).}
}
\begin{document}
\date{}
\maketitle
\begin{keywords}
Elliptic interface problems, unfitted mesh, extended finite element, high order
\end{keywords}

\begin{AMS}
65N12, 
65N15, 
65N30 
\end{AMS}

\begin{abstract}
In this paper, we study arbitrary order extended finite element (XFE) methods based on two discontinuous Galerkin (DG) schemes in order to solve elliptic interface problems in two and three dimensions. Optimal error estimates in the piecewise $H^1$-norm and in the $L^2$-norm are rigorously proved for both schemes. In particular, we have devised a new parameter-friendly DG-XFEM method, which means that no ``sufficiently large'' parameters are needed to ensure the optimal convergence of the scheme. To prove the stability of bilinear forms, we derive non-standard trace and inverse inequalities for high-order polynomials on curved sub-elements divided by the interface. All the estimates are independent of the location of the interface relative to the meshes. Numerical examples are given to support the theoretical results.

\end{abstract}

\section{Introduction}
Many multi-physics problems, including fluid--structure interaction problems and multiphase flow problems, involve coupling between different physical systems through the interface, which separates two phases of matter, i.e., solid, liquid, or gaseous. In the endeavor to solve such multi-physics problems, one of the most challenging tasks is that of devising an accurate numerical discretization of the interface problems. In this paper,
we consider the following elliptic interface problem:
\begin{equation} \label{interface_problem}
\left\{
\begin{array}{rccl}
-\nabla\cdot (\alpha(x) \nabla u)&=&f, &\mbox{ in } \Omega_1\cup\Omega_2, \\
 \left[\alpha(x) \nabla u\right] &=& g_N, &\mbox{ on } \Gamma,\\
\left[u\right]&=&\boldsymbol{g_D}, &\mbox{ on } \Gamma,\\
u&=&0, &\mbox{ on } \partial\Omega.
\end{array}
\right.
\end{equation}
The global regularity of the solution is low due to the nature of the interface. Here, domain $\Omega$ is a bounded and convex polygonal/polyhedral domain in $\mathbb{R}^d~(d=2$ or $3)$ and an internal interface $\Gamma$ divides $\Omega$ into two open sets, $\Omega_1$ and $\Omega_2$. We assume that $\Gamma= \partial\Omega_1$ is $C^2$-smooth (see Figure \ref{fig:domain} for an illustration of a unit square that contains a circle as an interface). We also assume that $\Gamma\cap\partial\Omega=\emptyset$. The jump $[\cdot]$ is defined in \eqref{jump_ave_scalar} and \eqref{jump_ave_vector}, and the coefficient $\alpha(x)$ is bounded from below and above by some positive constants. Due to the discontinuity of the coefficient $\alpha(x)$, the standard numerical methods, which are efficient for smooth solutions, usually lead to a loss of accuracy across the interface.

One way to render the more accurate approximation is to use interface-fitted/resolved grids. This way, the non-smoothness of the solution can be restricted to a ``narrow'' sub-domain in respect to the grid near the interface, such that the approximation error caused by the grid-mismatch is reduced to some extent. In \cite{xu82} (see also \cite{xu13} for an English translation) and more recently in \cite{chen98},  the following error estimate is obtained for $d=2$:
\begin{equation*}
\|u-u_h\|_{L^2(\Omega)}+h  |u-u_h|_{H^1(\Omega_1\cup\Omega_2)}\lesssim |\log h|^{1/2}h^2|u|_{H^2( \Omega_1\cup\Omega_2)}.
\end{equation*}
A sharper analysis is given in \cite{bramble96}, wherein the logarithm factor of the above estimate is removed for $d=2$. Here, we use the notation $H^m(\Omega_1\cup\Omega_2)= \{v\in L^2(\Omega), v|_{\Omega_1}\in H^m(\Omega_1)\ {\rm and}\ v|_{\Omega_2}\in H^m(\Omega_2)\}$, which is equipped with the norm $\|\cdot\|_{H^m(\Omega_1\cup\Omega_2)}=(\|\cdot\|^2_{H^m(\Omega_1)}+\|\cdot\|^2_{H^m(\Omega_2)})^{1/2}$. The interface-fitting assumption in the works referenced thus far can be loosened slightly so that the interface $\Gamma$ is ``$\mathcal{O}(h^2)$-resolved by the mesh" \cite{li10}. Further, the shape-regularity restriction of the grid can be loosened to maximal-angle-bounded grids \cite{chen15}. The optimal approximation in regard to the accuracy of the linear element space can also be proved on these grids.

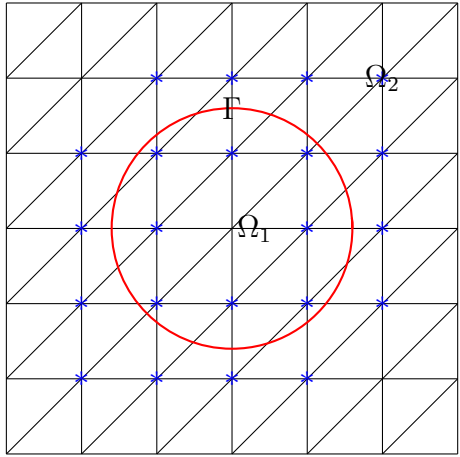
\begin{figure}[htbp]
\begin{center}

\begin{tikzpicture}[scale=1]
\draw(0,0)--(6,0);
\draw(0,1)--(6,1);
\draw(0,2)--(6,2);
\draw(0,3)--(6,3);
\draw(0,4)--(6,4);
\draw(0,5)--(6,5);
\draw(0,6)--(6,6);

\draw(0,0)--(0,6);
\draw(1,0)--(1,6);
\draw(2,0)--(2,6);
\draw(3,0)--(3,6);
\draw(4,0)--(4,6);
\draw(5,0)--(5,6);
\draw(6,0)--(6,6);

\draw(0,1)--(5,6);
\draw(0,2)--(4,6);
\draw(0,3)--(3,6);
\draw(0,4)--(2,6);
\draw(0,5)--(1,6);
\draw(0,0)--(6,6);
\draw(1,0)--(6,5);
\draw(2,0)--(6,4);
\draw(3,0)--(6,3);
\draw(4,0)--(6,2);
\draw(5,0)--(6,1);

\draw[thick, red] (4.6,3) arc (0:365:1.6);

\draw(1,1)[blue]node{$*$};
\draw(2,1)[blue]node{$*$};
\draw(3,1)[blue]node{$*$};
\draw(4,1)[blue]node{$*$};
\draw(1,2)[blue]node{$*$};
\draw(2,2)[blue]node{$*$};
\draw(3,2)[blue]node{$*$};
\draw(4,2)[blue]node{$*$};
\draw(5,2)[blue]node{$*$};
\draw(1,3)[blue]node{$*$};
\draw(2,3)[blue]node{$*$};
\draw(4,3)[blue]node{$*$};
\draw(5,3)[blue]node{$*$};
\draw(1,4)[blue]node{$*$};
\draw(2,4)[blue]node{$*$};
\draw(3,4)[blue]node{$*$};
\draw(4,4)[blue]node{$*$};
\draw(5,4)[blue]node{$*$};
\draw(2,5)[blue]node{$*$};
\draw(3,5)[blue]node{$*$};
\draw(4,5)[blue]node{$*$};
\draw(5,5)[blue]node{$*$};
\draw(5,5)node{$\Omega_2$};
\draw(3.3,3)node{$\Omega_1$};
\draw(3,4.6)node{$\Gamma$};

\end{tikzpicture}
\caption{Domain $\Omega=\Omega_1\cup\Gamma\cup\Omega_2$ with an unfitted mesh.}
\label{fig:domain}
\end{center}
\end{figure}

In an interface-fitted mesh, the sides ($d=2$) or  the edges ($d=3$) intersect with the interface only through their vertices. Unfortunately, it is usually a nontrivial and time-consuming task to construct good interface-fitted meshes for problems involving geometrically complicated interfaces. When the problem is time-dependent, the domain needs to be re-meshed at each time step, which introduces an interpolation error between two consecutive meshes. Therefore, numerous unfitted mesh methods, in which the interface is allowed to cross the elements, have been proposed in the literature. In the finite difference setting, we refer to the immersed boundary method in \cite{peskin77}, the immersed interface method in \cite{ll94, I06}, the ghost fluid method in \cite{lfk00}, and the references therein. In the finite element framework, we refer to the work of \cite{cgh08, gll08, llw03} for elliptic problems with discontinuous coefficients in which finite element basis functions are locally modified for elements that intersect with the interface where the coefficient jumps. In \cite{chen_xiao}, the adaptive immersed interface finite element method based on a posteriori error estimates is proposed for elliptic and Maxwell equations with discontinuous coefficients.

In the past decade, a combination of the extended finite element method (XFEM) (sometimes also known as the unfitted finite element method) with the Nitsche scheme has become a popular discretization method.
XFEM was introduced in the context of crack formation in structure mechanics, and one of its benefits is the ability to model discontinuities independent of the mesh structure \cite{belytschko99,moes99}. The idea of XFEM is to enrich the original finite element space by using specially designed basis functions that reflect the local features (discontinuity, singularity, boundary layer, etc.) of the problem. We refer to \cite{fries10} and the references therein for a historical account of XFEM. Inspired by the simple idea for handling Dirichlet boundary conditions described in \cite{nitsche70}, Hansbo \cite{hansbo02} applied Nitsche's method to reformulate the problem \eqref{interface_problem} in an XFE space. Hansbo \cite{hansbo02} proved that this Nitsche-XFEM can achieve the optimal convergence rate for the linear element, thereby generalizing the results in \cite{b70,barrett87}. In \cite{bh12}, the Nitsche-type weak boundary conditions are extended to a fictitious domain setting. A penalty term acting on the jumps of the gradients is added over the element faces, and optimal a priori error estimates are thereby derived for the linear element. In \cite{massjung12}, an unfitted symmetric interior penalty discontinuous Galerkin method for elliptic interface problems is considered, and optimal $h$-convergence for arbitrary $p$ is given for the two-dimensional case in the energy norm and in the $L^2$-norm. In \cite{wu2010}, an unfitted $hp$-interface penalty finite element method for elliptic interface problems is studied for both two and three dimensions. An extra flux penalty term is added to the bilinear forms. Thus the stability could be proved by applying local trace and inverse inequalities on regular sub-elements. In \cite{burman16}, the focus is a modified scheme for which the error estimates are independent of the contrast between diffusion coefficients. In \cite{wang15}, a quadratic Nitsche-XFEM is studied for the interface problem, and a clear classification of the shape of interface intersecting elements is given.
An overview of the ways in which Nitsche's method has been applied to interface problems is given in \cite{hansbo05}.

The Nitsche-XFEM can be interpreted as applying interior penalty (IP) methods on the interface, and techniques inspired by IP methods are used in \cite{hansbo02,massjung12,wu2010}. In this paper, we first extend IP-XFEM to high-order XFE spaces, and then we consider another new DG-XFEM for solving elliptic interface problems \eqref{interface_problem}.
We note that in our first approach, the penalization is applied only to the jump of the solution values across the interface (compared with the bilinear form in \cite{wu2010}), and the optimal $h$-convergence rate for arbitrary $p$ in the energy and $L_2$-norm are proved regardless of the dimension. The major and defining step in our variant is a delicate choice of the weight in the average (see \eqref{kappa}), which leads to an inverse estimate for possibly degenerated sub-elements (see \eqref{ineq_main}). Whereas Nitsche-type schemes are sometimes criticized for the inconvenient choice of stabilization parameters, we propose a second ``parameter-friendly'' scheme. In this scheme a penalization based on a lifting operator is introduced locally along the interface, and parameters need not be ``sufficiently large'' in regard to establishing the stability of the bilinear form. Furthermore, we derive a generalized version of C\' ea's lemma (see \eqref{inf-sup}) to retrieve the optimal convergence rate on high-order XFE spaces, even though the bilinear form is not bounded for functions in the continuous space in a normal sense. The main results of the analysis are summarized in Theorems \ref{thm1} and \ref{thml2}.

Note that the trace and inverse inequalities such as \eqref{ineq_main} are pivotal both in analyzing Nitsche-type methods, and in deriving approximate penalty parameters to stabilize these schemes.
They play an even more important role in the analysis of the unfitted mesh approach where the interface is allowed to intersect elements in an arbitrary manner. When sub-elements degenerate, which is not a rare case, the traditional technique by trace theorem and scaling argument is difficult to apply. The lowest version of \eqref{ineq_main} is derived in \cite{hansbo02}, which utilized the fact that the gradient of the linear polynomial is constant. A similar inequality has been proved for possibly degenerated sub-elements in \cite{massjung12} for two dimensions, whereas further justification is required for three dimensions. In Section \ref{sec:aux}, we prove the trace and inverse inequalities for polynomials of arbitrary order and for a general class of sub-elements, even though some of these may be very irregular in shape. The stability and the optimal convergence rate of the schemes are thus obtained.

This paper is organized as follows. In Section 2, we give some preliminary results, which are used in subsequent sections, and then we introduce the XFE spaces and reformulate the interface problem \eqref{interface_problem} in two types of DG schemes. In Section \ref{sec:aux}, we prove a special inequality \eqref{ineq_main} --- this is the key step in proving the stability of DG-XFEM with arbitrary polynomial order for both the 2-$d$ and 3-$d$ interface problems. The $H^{1}$- and $L^{2}$- error estimates of both schemes --- which attain the optimal order of the convergence rate in respect to mesh size $h$ --- are given in Section \ref{sec:error}. We also prove the parameter-friendly property of the second scheme in this section. Numerical examples are provided in Section \ref{sec:numerics} to support the theoretical results.

\section{XFE and DG schemes for interface problems}
\setcounter{equation}0

\subsection{Notation and XFE space}
We begin by providing some of the notation used in this paper.
Given a bounded domain $D\subset \mathbb{R}^d$ and a positive integer $m$, $H^m(D)$ is the Sobolev space with the corresponding usual norm and semi-norm, denoted, respectively, by $\|\cdot\|_{H^m(D)}$ and $|\cdot|_{H^m(D)}$. We use $|\cdot|$ for the measure of domains, such as the volume of a $3$-$d$ manifold, the area of a $2$-$d$ manifold, or the length of a $1$-$d$ manifold. In this paper, $d$ always denotes the dimension of domain $\Omega$, unless stated otherwise. Throughout the paper, ``$\lesssim\cdots $" stands for ``$\leq C\cdots $", the generic constant $C$ is independent of both mesh size $h$ and the location of the interface relative to the meshes.

Denote by $\{\mathcal{T}_h\}$, a
family of conforming, quasi-uniform, and regular partitions of $\Omega$ into triangles and parallelograms/tetrahedrons and parallelepipeds. For each element $K\in\mathcal{T}_h$, we use $h_K$ for its diameter. Let $h = \max\{h_K: K\in \mathcal{T}_h\}$. As $K$ is of regular shape, there is a constant $\gamma_0$ such that
\begin{align}\label{gamma0}
h_K^d\le\gamma_0 |K|, \quad \forall K\in \mathcal{T}_h.
\end{align}
We define the set of all elements intersected by $\Gamma$ as $\mathcal{T}_h^\Gamma = \{K\in \mathcal{T}_h: |K\cap\Gamma| \neq0\}$. For an element $K$ in $\mathcal{T}_h^\Gamma$, let $e_K=K\cap\Gamma$ be the part of $\Gamma$ in $K$. Each $\mathcal{T}_h$ induces a partition of interface $\Gamma$, which we denote by $\mathcal{E}_h^\Gamma = \{e_K: e_K=K\cap\Gamma, K\in \mathcal{T}_h^\Gamma\}$.
For any $K\in \mathcal{T}_h$, let $K_i = K\cap \Omega_i$ denote the part of $K$ in $\Omega_i$ and
$\boldsymbol{n}_i$ be the unit outward normal vector on
$\partial K_i$ with $i = 1,2$. As $\Gamma$ is of class $C^2$, it is easy to prove that (cf.\cite{chen98,xu13}) each interface segment/patch $e_K$ is contained in a strip of width $\delta$ and satisfies
\begin{align}\label{gamma1}
\delta\le \gamma_1 h_K^2~~~\mbox{and}~~~|\boldsymbol{n}_i(\mathrm{x})-\boldsymbol{n}_i(\mathrm{y})|\le \gamma_2 h_K,~\forall \mathrm{x},\mathrm{y}\in e_K.
\end{align}

Now, let us simply introduce the XFE space. Let $\chi_i$ be the characteristic function on $\Omega_i$ with $i=1,2$. Given a mesh $\mathcal{T}_h$, let $V_h$ be the continuous piecewise polynomial function space of degree $p\geq 1$ on the mesh. Let $V_h^1:= V_h\cdot \chi_1$ and $V_h^2 := V_h\cdot \chi_2$. Define the XFE space by $V_h^\Gamma = V_h^1 +V_h^2$. Note that the restrictions of the functions in $V_h^\Gamma$ in each sub-domain are standard continuous finite element functions, whereas discontinuity may occur only across $\Gamma$. Since the solution of problem \eqref{interface_problem} is non-smooth only in the vicinity of the interface, the XFE space is an appropriate choice for the discretization. Nitsche-XFEM, as noted in the introduction, can thus be regarded as relying on the application of the DG approach on the interface $\Gamma$ instead of on the interelement edges.

\subsection{DG schemes for interface problems}
For a scalar-valued function $v$, let $v_i$
= $v|_{\partial K_i}$, and similarly, for a vector-valued function $\boldsymbol{q}$,
we denote $\boldsymbol{q}_i=\boldsymbol{q}|_{\partial K_i}$. We define the weighted
average $\{\cdot\}$ and the jump $[\cdot]$ on $e\in \mathcal{E}_h^\Gamma$
by
\begin{align}
\{v\} &= \kappa_1 v_1 + \kappa_2 v_2,  &[v] = v_1 \boldsymbol{n}_1
   + v_2 \boldsymbol{n}_2,\quad\;\; \label{jump_ave_scalar}\\
\{\boldsymbol{q}\} &= \kappa_1\boldsymbol{q}_1 + \kappa_2\boldsymbol{q}_2, & [\boldsymbol{q}] = \boldsymbol{q}_1\cdot \boldsymbol{n}_1 +
\boldsymbol{q}_2\cdot \boldsymbol{n}_2. \label{jump_ave_vector}
\end{align}
For the stability analysis of our schemes, we define $(\kappa_1, \kappa_2)$ on each element as follows:
\begin{equation}\label{kappa}
\kappa_i = \left\{
\begin{array}{cl}
  0, &\mbox{ if } \frac{|K_i|}{|K|}<c_0 h_K, \\
  1, &\mbox{ if } \frac{|K_i|}{|K|}>1-c_0 h_K,\\
  \frac{|K_i|}{|K|}, &\mbox{ otherwise }.
\end{array}
\right.
\end{equation}
Clearly, $0\le\kappa_i\le1$ and $\kappa_1+\kappa_2=1$ so that $\{\cdot\}$ is a convex combination along $\Gamma$. Roughly speaking, we adopt the weight $\kappa_i=\frac{|K_i|}{|K|}$ suggested in \cite{hansbo02} for general sub-elements and we set $\kappa_i=0$ for $|K_i|<c h_K^{d+1}$. Actually, we expect that the contributions of functions with ``very small'' support, say, $O(h_K^{d+1})$, can be eliminated without influencing the approximation quality significantly. Here, the user-defined constant $c_0\ge2\gamma_0\gamma_1$ represents this threshold and $\gamma_0$, $\gamma_1$ are constants defined in \eqref{gamma0} and \eqref{gamma1}, respectively. In Lemma \ref{ltrace}, we already elaborate the dependence of $c_0$ on these generic constants. For an alternative definition of $\kappa_i$, we refer to \cite{hansbo05} and the remarks presented after the proof of Lemma \ref{ltrace} in Section \ref{sec:aux}.

For any scalar-valued function $v$ and any vector-valued function $\boldsymbol{w}$, we have the following identity:
\begin{equation*}
 (v_1 \boldsymbol{n}_1)\cdot \boldsymbol{w}_1 + (v_2 \boldsymbol{n}_2)\cdot \boldsymbol{w}_2 =  [v] \cdot \{\boldsymbol{w}\} +  \{v\} [\boldsymbol{w}] + (\kappa_2-\kappa_1)(v_1-v_2)[\boldsymbol{w}].
\end{equation*}
Testing the elliptic problem \eqref{interface_problem} by any $v\in V_h^\Gamma$, using integration by parts and the above identity, we have
 \begin{align}\label{consistency}
    &\int_{\Omega_1\cup\Omega_2} \alpha(x) \nabla u\cdot \nabla v - \int_{\Gamma} \{\alpha(x) \nabla u\}\cdot [v]=  \int_{\Omega} f\  v+\int_{\Gamma} g_N (\kappa_1v_2+\kappa_2v_1).
\end{align}

We propose two types of DG schemes for interface problem \eqref{interface_problem} on the XFE space $V_h^\Gamma$. As the restrictions of the functions in $V_h^\Gamma$ on each $\Omega_i$ are standard continuous finite element functions, we introduce penalty terms for only those elements cut by the interface in our bilinear forms.

The first scheme is inspired by the interior penalty (IP) methods. Let $V = H^2(\Omega_1\cup\Omega_2)$ and $V(h) = V_h^\Gamma + V$. We define a bilinear form on $V(h)\times V(h)$:
\begin{align}
B_h^{(1)}(w,v):=&\int_{\Omega_1\cup\Omega_2}\alpha(x) \nabla w \cdot \nabla v
-\int_{\Gamma}\{\alpha(x)\nabla w\}\cdot [v] \nonumber\\
&-\beta \int_{\Gamma}[w] \cdot \{\alpha(x)\nabla v\}
 +\sum_{K\in\mathcal{T}_h^\Gamma}\frac{\eta_{\beta}}{h_K}\int_{K\cap\Gamma}[w]\cdot [v],\label{IP_blinear_first}
\end{align}
where $\eta_{\beta} h_K^{-1}\int_{K\cap\Gamma}[w]\cdot [v]$ is a penalty term acting on each segment/patch of $\Gamma$ and $\eta_\beta$ is a parameter to be specified in Section \ref{sec:error}. Here, $\beta$ is a real number. When $\beta=1$,  $B_h^{(1)}(\cdot,\cdot)$ is symmetric and corresponds to the symmetric interior penalty Galerkin (SIPG) method \cite{arnold82,w78}, whereas $\beta =-1$ gives a non-symmetric interior penalty Galerkin (NIPG) formulation \cite{rwg99}.

To introduce the second type of penalization, for any $K\in \mathcal{T}_h^\Gamma$ and $e=K\cap\Gamma$, we define a lifting operator $r_e :[L^2(e)]^d \rightarrow W_K$:
\begin{align}\label{liftop}
\int_K r_e(\boldsymbol{q})\cdot \alpha(x)\boldsymbol{w}_h  &= -\int_{e} \boldsymbol{q}\cdot \{\alpha(x)\boldsymbol{w}_h\}, \;\; \forall\, \boldsymbol{w}_h \in W_K,
\end{align}
where $$ W_K=\{\boldsymbol{w}_h\in[L^2(\Omega)]^d:\; \boldsymbol{w}_h |_{K_i}\in [P_p(K_i)]^d,~i=1,2 ~~\mbox{and}~~ \boldsymbol{w}_h |_{\Omega\backslash K}=0\}.$$

By adding a penalization based on the operator $r_e$, we propose a parameter-friendly DG scheme that guarantees stability independent of a condition on the stabilization parameter:
\begin{align*}
B_h^{(2)}(w,v):=&\int_{\Omega_1\cup\Omega_2}\alpha(x) \nabla w \cdot \nabla v
-\int_{\Gamma}\{\alpha(x)\nabla w\}\cdot [v]
-\int_{\Gamma}[w] \cdot \{\alpha(x)\nabla v\}\\
&+\sum_{K\in\mathcal{T}_h^\Gamma}\frac{\eta_1}{h_K}\int_{K\cap\Gamma}[w]\cdot [v]+\sum_{e\in\mathcal{E}_h^\Gamma}\int_{\Omega}\eta \alpha(x)r_e([w])\cdot r_e([v]),
\end{align*}
where $\eta_1$ and $\eta$ are two positive parameters. Unlike $B_h^{(1)}(\cdot,\cdot)$, in which the selection of $\eta_\beta$ depends on the geometric property of the interface and triangulation, we prove that the scheme \eqref{dg_primal} based on $B_h^{(2)}(\cdot,\cdot)$ has a parameter-friendly feature. Further, the sparsity of the stiffness matrix is not affected. In fact, the only requirement for the well-posedness of \eqref{dg_primal} is $\eta_1 \geq 1$ and $\eta \geq 2$.

Define further the linear form $F_h^{(i)}(\cdot), i = 1, 2$ on $V(h)$:
\begin{align}
 F_h^{(1)}(v):=&\int_{\Omega} f\  v+\int_{\Gamma} g_N (\kappa_1v_2+\kappa_2v_1)-\beta\int_{\Gamma} \boldsymbol{g_D}\cdot \{\alpha(x)\nabla v\}+\sum_{K\in\mathcal{T}_h^\Gamma}\frac{\eta_\beta}{h_K}\int_{K\cap\Gamma}\boldsymbol{g_D}\cdot [v],\label{scheme1}\\
 F_h^{(2)}(v):=&F_h^{(1)}(v)+\sum_{e\in\mathcal{E}_h^\Gamma}\int_{\Omega}\eta \alpha(x)r_e(\boldsymbol{g_D})\cdot r_e([v]),\qquad \mbox{with}~~ \beta=1.\label{scheme2}
\end{align}
Then, the DG-XFE method for the interface problem \eqref{interface_problem} is: Find $u_h\in V_h^\Gamma$ such that
\begin{align}\label{dg_primal}
B_h(u_h,v_h) &= F_h(v_h), \quad \forall v_h\in V_h^\Gamma,
\end{align}
where $B_h(\cdot,\cdot) = B_h^{(i)} (\cdot,\cdot)$ and $F_h(\cdot)=F_h^{(i)}(\cdot)$ with $i = 1, 2$.

As the solution $u$ of \eqref{interface_problem} satisfies \eqref{consistency}, it is easy to check that \eqref{dg_primal} has become an identity for both schemes if we replace $u_h$ with $u$. Furthermore, the Galerkin orthogonality holds true:
\begin{equation}\label{dg_ortho}
B_h(u - u_h,v_h) = 0, \quad\forall\, v_h\in V_h^\Gamma.
\end{equation}

\subsection{Norm-equivalence property}
To end this section, we derive a norm-equivalence result (Lemma \ref{lemma:homothety}) that relates the $L^2$-norm of any polynomial functions in a bounded convex domain to the $L^2$-norm in a subset of comparable size. This property consists of the main step toward the proof of the trace and inverse inequalities in the next section. We start from a variant result of norm-equivalence in finite dimensional spaces.
\begin{lemma}\label{pullback0}
Given an integer $p\geq 0$ and $\lambda\in(0,1)$. For any $v(x)\in P_p[0,1]$, there exists a constant $C$ dependent only on $\lambda$ and $p$ such that
\begin{align}\label{polyl2}
\|v\|_{L^2(0,1)} \le C(\lambda,p)\|v\|_{L^2(0, \lambda)},\qquad \|x^{\frac12}v\|_{L^2(0,1)} \le C(\lambda,p)\|x^{\frac12}v\|_{L^2(0, \lambda)}.
\end{align}
\end{lemma}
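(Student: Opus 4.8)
The plan is to prove both inequalities in \eqref{polyl2} by exploiting the finite-dimensionality of $P_p[0,1]$ together with a compactness/homogeneity argument, which transfers the problem to a continuous function on a compact set. The key observation is that on the finite-dimensional space $P_p[0,1]$, the two quantities appearing on each side of an inequality are both norms (or at least seminorms that vanish only on the zero polynomial), and any two norms on a finite-dimensional space are equivalent; one only needs to verify that the right-hand side defines a genuine norm and then track that the equivalence constant depends solely on $\lambda$ and $p$.

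For the first inequality, I would argue as follows. The map $v \mapsto \|v\|_{L^2(0,\lambda)}$ is a seminorm on $P_p[0,1]$, and in fact a norm: if $\|v\|_{L^2(0,\lambda)}=0$ then $v\equiv 0$ on the interval $(0,\lambda)$, and since $v$ is a polynomial of degree at most $p$, it must vanish identically on $[0,1]$. Likewise $v \mapsto \|v\|_{L^2(0,1)}$ is a norm on $P_p[0,1]$. Since $P_p[0,1]$ is finite-dimensional (of dimension $p+1$), all norms on it are equivalent, so there exists a constant $C(\lambda,p)$ with $\|v\|_{L^2(0,1)} \le C(\lambda,p)\|v\|_{L^2(0,\lambda)}$. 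To make the dependence on $\lambda$ and $p$ explicit and clean, one can realize $C(\lambda,p)$ as the supremum of $\|v\|_{L^2(0,1)}$ over the compact unit sphere $\{v\in P_p[0,1]: \|v\|_{L^2(0,\lambda)}=1\}$; continuity of $v\mapsto \|v\|_{L^2(0,1)}$ and compactness of this sphere (in any fixed norm on the finite-dimensional space) guarantee the supremum is attained and finite, and it manifestly depends only on $\lambda$ and $p$.

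The second inequality follows from exactly the same template, but with the weighted quantity $\|x^{1/2}v\|_{L^2(0,\mu)}^2 = \int_0^\mu x\, v(x)^2\,dx$. The only point requiring care is again the definiteness: if $\int_0^\lambda x\, v(x)^2\,dx = 0$, then $x\,v(x)^2 = 0$ almost everywhere on $(0,\lambda)$, hence $v\equiv 0$ on $(0,\lambda)$ (since $x>0$ there), and therefore $v\equiv 0$ on $[0,1]$ because it is a polynomial. Thus $v\mapsto \|x^{1/2}v\|_{L^2(0,\lambda)}$ is a norm on $P_p[0,1]$, and the compactness argument above yields the claimed bound with a constant depending only on $\lambda$ and $p$.

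The step I expect to be the main (and really the only) obstacle is verifying the definiteness of the right-hand quantities, i.e. that they are genuine norms rather than merely seminorms; this is what licenses the norm-equivalence argument. Both definiteness claims rest on the single elementary fact that a nonzero polynomial of degree at most $p$ cannot vanish on a set of positive measure, so this obstacle is routine. One should be mindful that the resulting constant $C(\lambda,p)$ is non-explicit, obtained via compactness; if an explicit or at least monotone-in-$\lambda$ constant were needed downstream, one could instead give a direct construction by mapping $(0,\lambda)$ affinely to $(0,1)$ and comparing coefficients, but for the stated result the compactness argument suffices and keeps the dependence on $(\lambda,p)$ transparent.
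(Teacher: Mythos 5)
Your proof is correct and follows essentially the same route as the paper, which states Lemma \ref{pullback0} without proof as ``a variant result of norm-equivalence in finite dimensional spaces''; your compactness argument, together with the verification that the right-hand sides are definite (a nonzero polynomial of degree at most $p$ cannot vanish on a set of positive measure), is exactly the standard justification the paper is invoking. The minor typo ($\mu$ in place of the generic endpoint) aside, nothing further is needed.
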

\begin{figure}[htbp]
\begin{center}
\includegraphics[scale=1.2]{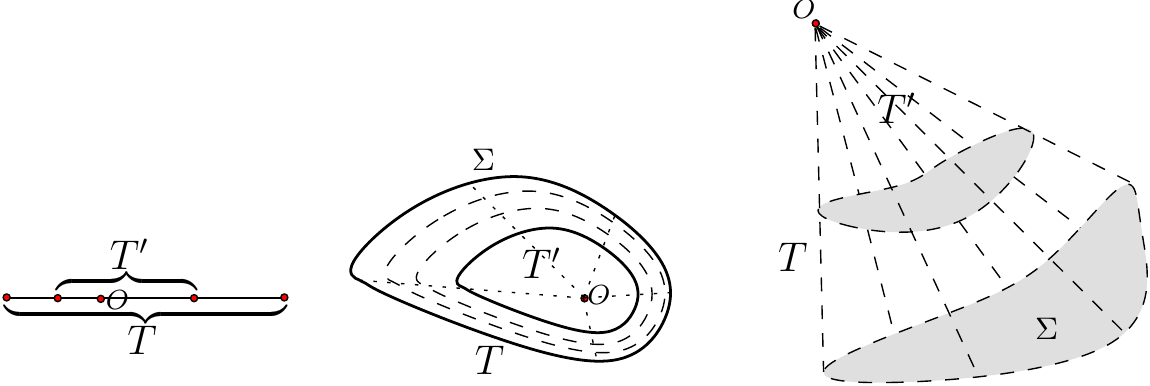}
\caption{The $L^2$-norm in the whole convex domain $T$ is dominated by the $L^2$-norm in $T'$, a subset of $T$, for any $v\in P_p(T)$.}
\label{fig:homothety}
\end{center}
\end{figure}
For any domain $T\in \mathbb{R}^d$, we say that $T_0$ is a homothetic image of $T$ if $T_0= \{\lambda (x-x_0)+x_0:\, x\in T\}$ for suitable $\lambda>0$ and $x_0\in\mathbb{R}^d$. Here, $x_0$ is called the homothetic center, from which each point $x$ in $T$ is mapped to a corresponding $x'$ in $T_0$ on the ray $\overrightarrow{x_0x}$ such that $\overrightarrow{x_0x'}=\lambda \overrightarrow{x_0x}$.
\begin{lemma}\label{lemma:homothety}
Given an integer $p\geq 0$ and $\lambda\in(0,1)$. Let $T$ be a closed convex domain in $\mathbb{R}^d$ with a (piecewise) smooth boundary. Assume that $T'$ contains a homothetic subset of $T$ with the scaling factor $\lambda$. Then, for any $v\in P_p(T)$, we have
\begin{align}\label{homothety}
\|v\|_{L^2(T)} \le C(\lambda,p+1)\|v\|_{L^2(T')},
\end{align}
where the upperbound constant $C(\lambda,p+1)$ is inherited from \eqref{polyl2}.
\end{lemma}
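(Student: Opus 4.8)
The plan is to reduce the $d$-dimensional estimate to the one-dimensional weighted inequalities of Lemma \ref{pullback0} by slicing $T$ along rays emanating from the homothetic center. Let $T_0\subseteq T'$ be the homothetic subset of $T$ with scaling factor $\lambda$, so that $T_0=\{x_0+\lambda(x-x_0):x\in T\}$ for some center $x_0$ and $T_0\subseteq T$. Since $\|v\|_{L^2(T_0)}\le\|v\|_{L^2(T')}$, it suffices to prove $\|v\|_{L^2(T)}\le C(\lambda,p+1)\|v\|_{L^2(T_0)}$.

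First I would observe that the center necessarily lies in $T$. Indeed, the inclusion $T_0\subseteq T$ means $x_0+\lambda(x-x_0)\in T$ for every $x\in T$; iterating, the points $x_0+\lambda^n(x-x_0)$ stay in $T$ and converge to $x_0$ as $n\to\infty$, so closedness of $T$ forces $x_0\in T$. Consequently, introducing polar coordinates $x=x_0+r\omega$ with $\omega\in S^{d-1}$ and using the convexity of $T$, each ray meets $T$ in a segment $\{x_0+r\omega:0\le r\le R(\omega)\}$ anchored at $r=0$, while the corresponding portion of $T_0$ is exactly $\{x_0+r\omega:0\le r\le\lambda R(\omega)\}$. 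Hence
\[
\|v\|_{L^2(T)}^2=\int_{S^{d-1}}\!\int_0^{R(\omega)}|v(x_0+r\omega)|^2\,r^{d-1}\,dr\,d\omega,
\]
and the same expression with upper limit $\lambda R(\omega)$ gives $\|v\|_{L^2(T_0)}^2$.

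Next, I would fix $\omega$ and rescale $r=R(\omega)s$. The function $\psi(s):=v(x_0+R(\omega)s\,\omega)$ is the restriction of $v$ to an affine line, hence $\psi\in P_p[0,1]$, and the common factor $R(\omega)^d$ cancels, reducing the inner integrals to a comparison of $\int_0^1|\psi|^2 s^{d-1}\,ds$ with $\int_0^\lambda|\psi|^2 s^{d-1}\,ds$ on the fixed reference interval. The constant produced is independent of $\omega$, so after multiplying by $R(\omega)^d$, integrating over $S^{d-1}$, and taking square roots, the claim follows at once.

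It remains to establish the one-dimensional weighted inequality for the weight $s^{d-1}$, and this is where the degree $p+1$ enters. For $d=1$ the weight is trivial and the first inequality of \eqref{polyl2} applies directly to $\psi\in P_p$. For $d=2$ the weight is $s$, i.e. $s^{d-1}|\psi|^2=|s^{1/2}\psi|^2$, and the second inequality of \eqref{polyl2} applies to $\psi\in P_p$. For $d=3$ the weight is $s^2$; here the key trick is to absorb one factor of $s$ into the polynomial, writing $s^2|\psi|^2=|s\psi|^2$ with $s\psi\in P_{p+1}[0,1]$, and then to invoke the first inequality of \eqref{polyl2} with degree $p+1$. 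Taking $C(\lambda,p+1)$ as the largest of these constants (monotone in the degree) yields the stated bound. The main obstacle, and the reason a plain scaling argument is insufficient, is precisely this weighted comparison near $r=0$: the Jacobian weight $r^{d-1}$ degenerates at the homothetic center, so the estimate genuinely relies on the weighted form of Lemma \ref{pullback0}, and the weight $s^2$ in three dimensions is what forces the degree increment reflected in the constant $C(\lambda,p+1)$.
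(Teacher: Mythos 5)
Your argument is correct and is essentially the paper's own proof: both slice $T$ radially from the homothetic center (which you rightly verify lies in $T$), reduce on each ray to the one-dimensional weighted inequalities of Lemma \ref{pullback0} with weight $s^{d-1}$, and for $d=3$ absorb a factor of $s$ into the polynomial to account for the degree increment in $C(\lambda,p+1)$. The only cosmetic difference is that you use polar coordinates $x=x_0+r\omega$ with the normalization $s=r/R(\omega)$, whereas the paper parameterizes by boundary points $\mathbf{r}(\xi)$ and the radial factor $s\in[0,1]$; these are the same decomposition.
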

\begin{proof}
We only need to consider case $T'$ as a homothetic subset of $T$. As $\lambda<1$, by the fixed-point theorem, the homothetic center $O\in T'$ (Figure \ref{fig:homothety}). Without loss of generality, we take $O$ as the origin such that $T$ can be seen as a continuous contraction from (part of) its boundary $\Sigma$ (Figure \ref{fig:homothety}), that is,
$$T=\{x:\; x=s\cdot\mathbf{r},\;s\in [0,1],\; \mathbf{r}\in \Sigma\}, \quad\mbox{and}\quad T'=\lambda T.$$

For $d=1$, the result of \eqref{homothety} is a direct consequence of \eqref{polyl2} by applying the scaling argument on each segment of $T$ separated by $O$. For higher dimensions, the result can be derived by reducing a multiple integral to single integrals.

For example, when $d=2$, let $\Sigma$ be parameterized by $x=\mathbf{r}(\xi), \xi\in I$. Note that $s|\mathbf{r}(\xi)\times \mathbf{r}'(\xi)|$ is the absolute value of  the Jacobian determinant of the mapping $x =s\cdot\mathbf{r}(\xi)$. We can rewrite the double integrals of $v^2$ in a $(s, \xi)$-coordinate system and thereby obtain
\begin{align*}
\|v\|_{L^2(T)}^2=&\int_{I}|\mathbf{r}(\xi)\times \mathbf{r}'(\xi)|\mathrm{d}\xi \int_0^1v^2(s\cdot\mathbf{r(\xi)})s\mathrm{d}s\\
\le& C^2(\lambda,p)\int_{I}|\mathbf{r}(\xi)\times \mathbf{r}'(\xi)|\mathrm{d}\xi \int_0^{\lambda}v^2(s\cdot\mathbf{r(\xi)})s\mathrm{d}s=C^2(\lambda, p)\|v\|_{L^2(T')}^2,
\end{align*}
where we used the fact of \eqref{polyl2} as $v^2(s\cdot\mathbf{r})$ is a 1-$d$ polynomial of $s$ for any given $\xi$. Similarly, if $d=3$, characterized $\Sigma$ by $x=\mathbf{r}(\xi,\eta), (\xi,\eta)\in U$, then we have
\begin{align*}
\|v\|_{L^2(T)}^2=&\int_{U}|\mathbf{r}(\xi,\eta)\cdot(\mathbf{r}_\xi(\xi,\eta)\times \mathbf{r}_\eta(\xi,\eta))|\mathrm{d}\xi \mathrm{d}\eta \int_0^1v^2(s\cdot\mathbf{r(\xi,\eta)})s^2\mathrm{d}s\\
\le& C^2(\lambda,p+1)\int_{U}|\mathbf{r}(\xi,\eta)\cdot(\mathbf{r}_\xi(\xi,\eta)\times \mathbf{r}_\eta(\xi,\eta))|\mathrm{d}\xi \mathrm{d}\eta \int_0^{\lambda}v^2(s\cdot\mathbf{r(\xi,\eta)})s^2\mathrm{d}s\\
=&C^2(\lambda,p+1)\|v\|_{L^2(T')}^2.
\end{align*}
This completes the proof of Lemma \ref{lemma:homothety}.
\end{proof}

\section{Special trace and inverse inequalities}\label{sec:aux}
\setcounter{equation}0

In this section, we give some special trace and inverse inequalities, which are important in the stability analysis of DG-XFEMs \eqref{dg_primal} for interface problems.

\begin{lemma}\label{aniso}
For any $v\in H^1(K_i)$, the following trace inequality holds:
\begin{equation}
\|v\|_{L^2(e_K)}^2 \lesssim  \|v\|_{L^2(K_i)}|v|_{H^1(K_i)}  + \int_{\partial K_i\backslash e_K}  v^2(s)\label{trace2}
\end{equation}
if $h\in (0, h_0]$. Here, $e_K=K\cap \Gamma$ and $h_0$ is a constant independent of the location of $\Gamma$ relative to $K$. In fact, we can make $h_0$ explicit with $h_0=\frac1{\gamma_2}$ where $\gamma_2$ is defined in \eqref{gamma1}.
\end{lemma}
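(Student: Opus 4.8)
The plan is to bypass the usual scaling/trace-theorem route entirely, since that would produce a constant depending on the (possibly degenerate) shape of $K_i$ and hence on where $\Gamma$ cuts $K$. Instead I would establish the estimate directly through the Gauss--Green formula applied to $v^2$, using a judiciously chosen \emph{constant} vector field. The single geometric ingredient is the near-flatness of the interface on each cut element recorded in \eqref{gamma1}: because the outward normal $\boldsymbol{n}_i$ varies by at most $\gamma_2 h_K$ over $e_K$, a fixed direction can serve as a uniformly transversal ``flux direction'' on all of $e_K$, and the divergence term drops out precisely because the field is constant.

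Concretely, fix any point $\mathrm{x}_0\in e_K$ and set $\boldsymbol{\phi}:=\boldsymbol{n}_i(\mathrm{x}_0)$, a unit constant vector. For every $\mathrm{x}\in e_K$, using $|\boldsymbol{n}_i(\mathrm{x}_0)|=|\boldsymbol{n}_i(\mathrm{x})|=1$, the polarization identity, and \eqref{gamma1},
\begin{align*}
\boldsymbol{\phi}\cdot\boldsymbol{n}_i(\mathrm{x})=1-\tfrac12|\boldsymbol{n}_i(\mathrm{x}_0)-\boldsymbol{n}_i(\mathrm{x})|^2\ge 1-\tfrac12(\gamma_2 h_K)^2\ge \tfrac12,
\end{align*}
where the last inequality uses $h_K\le h\le h_0=1/\gamma_2$, so that $\gamma_2 h_K\le 1$. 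Thus $\boldsymbol{\phi}$ is transversal to $e_K$ with a lower bound independent of the location of $\Gamma$ relative to $K$, which is the crux of the whole estimate.

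Next I would apply the divergence theorem to the field $v^2\boldsymbol{\phi}$ on $K_i$; this is legitimate for $v\in H^1(K_i)$ on the Lipschitz domain $K_i$ since $v^2\boldsymbol{\phi}\in [W^{1,1}(K_i)]^d$ (if preferred, prove it first for smooth $v$ and pass to the limit by density). As $\boldsymbol{\phi}$ is constant, $\nabla\cdot(v^2\boldsymbol{\phi})=2v\,\boldsymbol{\phi}\cdot\nabla v$, and splitting $\partial K_i=e_K\cup(\partial K_i\backslash e_K)$ gives
\begin{align*}
\int_{e_K}v^2\,\boldsymbol{\phi}\cdot\boldsymbol{n}_i=2\int_{K_i}v\,\boldsymbol{\phi}\cdot\nabla v-\int_{\partial K_i\backslash e_K}v^2\,\boldsymbol{\phi}\cdot\boldsymbol{n}_i.
\end{align*}
Inserting the lower bound $\boldsymbol{\phi}\cdot\boldsymbol{n}_i\ge\tfrac12$ on the left, using $|\boldsymbol{\phi}|=1$ together with Cauchy--Schwarz on the volume term, and $|\boldsymbol{\phi}\cdot\boldsymbol{n}_i|\le1$ on the boundary term, one obtains
\begin{align*}
\tfrac12\|v\|_{L^2(e_K)}^2\le 2\|v\|_{L^2(K_i)}|v|_{H^1(K_i)}+\int_{\partial K_i\backslash e_K}v^2(s),
\end{align*}
and multiplying by $2$ yields \eqref{trace2}.

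I expect the only genuinely delicate point to be conceptual rather than computational: recognizing that a \emph{constant} field makes the divergence term vanish, and that \eqref{gamma1} is exactly what guarantees a uniform transversality constant $\tfrac12$ on $e_K$ once $h\le 1/\gamma_2$. This is what renders the constant independent of the interface location and immune to the degeneracy of $K_i$, in contrast with the failure of a naive scaling argument. A secondary, routine matter is justifying Gauss--Green at the regularity $v\in H^1(K_i)$, handled either by the $W^{1,1}$ divergence theorem on Lipschitz domains or by smooth approximation.
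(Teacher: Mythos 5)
Your proof is correct and follows essentially the same route as the paper: both apply the divergence theorem to $v^2$ times a fixed unit vector transversal to $e_K$, split $\partial K_i$ into $e_K$ and its complement, and use \eqref{gamma1} to secure the uniform lower bound $\tfrac12$ on the normal component for $h\le 1/\gamma_2$. The only cosmetic difference is that the paper takes the constant direction to be the normal of a secant line/plane through $d$ points of $e_K$, whereas you take $\boldsymbol{n}_i(\mathrm{x}_0)$ at a point of $e_K$ and make the transversality bound explicit via the polarization identity.
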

\begin{proof}
Let $\Gamma_K$ be a line/plane passing at least $d$ points in $e_K$. Denote $\boldsymbol{n}$ as the unit outward normal vector to $\Gamma_K$. Then, we have
\begin{align*}
\int_{K_i} 2v \frac{\partial v}{\partial \boldsymbol{n}}  = \int_{K_i} \frac{\partial v^2}{\partial \boldsymbol{n}} = \int_{\partial K_i} v^2 \boldsymbol{n}\cdot\boldsymbol{n_i}
= & \int_{e_K}  v^2 \boldsymbol{n}\cdot\boldsymbol{n_i}   + \int_{\partial K_i\backslash e_K} v^2 \boldsymbol{n}\cdot\boldsymbol{n_i}.
\end{align*}
Based on the assumptions that $\Gamma$ is $C^2$ smooth and that mesh size $h$ is small enough (say, $h\le\frac1{\gamma_2}$, see \eqref{gamma1}), we have $\frac12\le\boldsymbol{n}\cdot\boldsymbol{n_i}\le1$ on $e_K$. It follows that
\begin{align*}
 \int_{e_K}  v^2
\leq 2\left(\int_{K_i} 2v \frac{\partial v}{\partial \boldsymbol{n}}-\int_{\partial K_i\backslash e_K} v^2 \boldsymbol{n}\cdot\boldsymbol{n_i} \right)\leq 4 \left( \|v\|_{L^2(K_i)}|v|_{H^1(K_i)}  + \int_{\partial K_i\backslash e_K}  v^2(s) \right),
\end{align*}
which completes the proof of Lemma \ref{aniso}.
\end{proof}

The estimate of the interpolation error along $\Gamma$ relies on the following variant of trace inequality, which is a corollary of the above lemma. We also refer to \cite{hansbo02, wu2010} for details of the proof.
\begin{lemma}\label{ltrace2} There exists a constant $C$ that is dependent on $\Gamma$ but independent of the relative position of $\Gamma$ to the mesh, such that for any interface segment/patch $e_K=K\cap \Gamma\in \mathcal{E}_h^\Gamma$,
\begin{equation}\label{trace}
\|v\|_{L^2(e_K)}^2 \leq C (h_K^{-1}\|v\|_{L^2(K)}^2 + h_K\|\nabla v\|_{L^2(K)}^2),  \quad \forall v\in H^1(K).
\end{equation}
\end{lemma}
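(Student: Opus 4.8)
The plan is to derive \eqref{trace} directly from the anisotropic trace inequality \eqref{trace2} of Lemma \ref{aniso}, combined with a weighted Young's inequality and the classical scaled trace theorem on the (uncut) element $K$. Since any $v\in H^1(K)$ restricts to a function in $H^1(K_i)$ for each $i=1,2$, I would fix one index, say $i=1$, and apply \eqref{trace2} to obtain
\[
\|v\|_{L^2(e_K)}^2 \lesssim \|v\|_{L^2(K_1)}|v|_{H^1(K_1)} + \int_{\partial K_1\backslash e_K} v^2(s),
\]
valid whenever $h\le h_0 = 1/\gamma_2$. The two terms on the right are then treated separately so as to produce the correct powers of $h_K$.

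For the product term I would apply Young's inequality with weight $h_K$ and then enlarge the integration domain from $K_1$ to $K$, using $K_1\subseteq K$:
\[
\|v\|_{L^2(K_1)}|v|_{H^1(K_1)} \le \tfrac{1}{2h_K}\|v\|_{L^2(K_1)}^2 + \tfrac{h_K}{2}|v|_{H^1(K_1)}^2 \le \tfrac{1}{2h_K}\|v\|_{L^2(K)}^2 + \tfrac{h_K}{2}\|\nabla v\|_{L^2(K)}^2.
\]
This already matches the structure of the right-hand side of \eqref{trace}. The remaining term $\int_{\partial K_1\backslash e_K} v^2$ is an integral over a portion of the genuine element boundary, since $\partial K_1\backslash e_K$ consists of the parts of the faces of $K$ lying in $\overline{\Omega_1}$; hence $\int_{\partial K_1\backslash e_K} v^2 \le \|v\|_{L^2(\partial K)}^2$. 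I would then invoke the standard scaled trace theorem on the shape-regular element $K$, namely $\|v\|_{L^2(\partial K)}^2 \lesssim h_K^{-1}\|v\|_{L^2(K)}^2 + h_K\|\nabla v\|_{L^2(K)}^2$, which is a property of $K$ as a regular mesh element and carries no dependence on how $\Gamma$ cuts it. Summing the two bounds yields \eqref{trace}.

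The point worth emphasizing, and the only place any delicacy could enter, is that the constant must be independent of the position of $\Gamma$ relative to the mesh. This independence is \emph{not} produced by the present argument; it is inherited entirely from Lemma \ref{aniso}, whose constant is absolute (the factor $4$ in its proof) and whose only geometric requirement is $h\le 1/\gamma_2$, together with the fact that the two auxiliary estimates above involve only the uncut element $K$. Consequently the genuine obstacle was already overcome in Lemma \ref{aniso}, where the cut geometry is controlled; the steps remaining here are the routine Young's-inequality rescaling and the textbook element trace estimate, neither of which sees the interface. I would finally note that the choice $i=1$ is immaterial, as $e_K$ is common to $K_1$ and $K_2$, so the identical bound follows with $i=2$.
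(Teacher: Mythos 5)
Your proof is correct and follows exactly the route the paper intends: the paper states that Lemma \ref{ltrace2} is a corollary of Lemma \ref{aniso} (deferring details to \cite{hansbo02,wu2010}), and your completion via the $h_K$-weighted Young's inequality for the product term plus the standard scaled trace estimate on the uncut, shape-regular element $K$ for the term over $\partial K_1\backslash e_K\subset\partial K$ is precisely the intended filling-in. You also correctly identify that the interface-position independence is inherited entirely from Lemma \ref{aniso} (subject to $h\le h_0=1/\gamma_2$), with the remaining steps seeing only $K$ itself.
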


In the following lemma, we derive trace and inverse inequalities on arbitrary convex domains in $\mathbb{R}^d$. We are not aware of any study in which the same or similar results relating to high-order polynomial functions are reported.
\begin{lemma} \label{lem:covex}
For any convex domain $T\subset\mathbb{R}^d$ with a (piecewise) smooth boundary and $v\in \mathcal{P}_p(T)$, the
following estimates hold:
\begin{align}
\|\nabla v\|_{L^2(T)} &\lesssim\frac{1}{r} \| v\|_{L^2(T)},\label{convex1}\\
\|v\|_{L^2(\partial T)}&\lesssim\frac{1}{r^{1/2}}\|v\|_{L^2(T)},\label{convex2}
\end{align}
where $r$ is the radius of the largest inscribed ball of $T$. Here, the hidden constants in the inequalities depend only on $p$ and $d$ and are independent of the shape of $T$.
\end{lemma}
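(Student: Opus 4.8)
The plan is to prove the inverse estimate \eqref{convex1} and the trace estimate \eqref{convex2} by slicing $T$ into one--dimensional chords and invoking elementary polynomial inequalities on each chord; the whole difficulty is that $T$ may be arbitrarily anisotropic, so the constants must be rendered independent of its shape. The device for achieving this is the largest inscribed ball $B(x_0,r)$, which I normalize by placing its center at the origin, so that $\overline{B(0,r)}\subseteq T$ and $T$ is star-shaped with respect to $0$.

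For \eqref{convex1} I would fix a coordinate direction $e_j$ and write $T$ as a union of chords $I_y=\{t:(t,y)\in T\}$, $y\in\mathbb{R}^{d-1}$, each an interval of length $L(y)$ by convexity. On a single chord the one--dimensional Markov inequality for polynomials of degree $\le p$ (a finite-dimensional norm equivalence in the spirit of \eqref{polyl2}) gives $\|\partial_j v(\cdot,y)\|_{L^2(I_y)}\le C(p)L(y)^{-1}\|v(\cdot,y)\|_{L^2(I_y)}$. Summing over the chords with $L(y)\ge r$ already produces the factor $r^{-1}$; the short chords $L(y)<r$, where $L(y)^{-1}$ blows up, are the crux. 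To remove them I introduce the ``thick part'' $T'_j=\{(t,y)\in T:L(y)\ge r\}$ and prove the geometric inclusion $\tfrac12 T\subseteq T'_j$: for $x=\tfrac12 x'\in\tfrac12 T$ the two points $\tfrac12 x'\pm\tfrac12 r e_j$ are convex combinations of $x'\in T$ and $\pm r e_j\in\overline{B(0,r)}\subseteq T$, lie in the same slice as $x$, and force $L(y)\ge r$. Since $\tfrac12 T$ is a homothetic image of $T$ with factor $\tfrac12$ contained in $T'_j$, Lemma \ref{lemma:homothety} applied to the polynomial $\partial_j v$ (of degree $\le p-1$) yields $\|\partial_j v\|_{L^2(T)}\le C(\tfrac12,p)\|\partial_j v\|_{L^2(T'_j)}$. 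On $T'_j$ every chord has length $\ge r$, so the per-chord Markov estimate and Fubini give $\|\partial_j v\|_{L^2(T'_j)}\le C(p)r^{-1}\|v\|_{L^2(T'_j)}\le C(p)r^{-1}\|v\|_{L^2(T)}$. Combining the two bounds and summing over $j=1,\dots,d$ proves \eqref{convex1}.

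For the trace estimate \eqref{convex2} I would decompose $T$ radially from the incenter rather than along a fixed axis, so that no short chords arise: writing $T=\{t\omega:\omega\in S^{d-1},\,0\le t\le\rho(\omega)\}$ with radial function $\rho(\omega)\ge r$, and using the boundary area element $ds=\rho(\omega)^{d-1}(\boldsymbol{n}\cdot\omega)^{-1}\,d\omega$ valid for convex bodies, one has
\begin{equation*}
\int_{\partial T}v^2\,ds=\int_{S^{d-1}}v(\rho(\omega)\omega)^2\,\frac{\rho(\omega)^{d-1}}{\boldsymbol{n}\cdot\omega}\,d\omega.
\end{equation*}
The essential geometric input is that, $x_0=0$ being the incenter, the supporting hyperplane at any $P=\rho(\omega)\omega\in\partial T$ lies at distance $P\cdot\boldsymbol{n}\ge r$ from the origin, whence $\boldsymbol{n}\cdot\omega=(P\cdot\boldsymbol{n})/\rho(\omega)\ge r/\rho(\omega)$ and the Jacobian factor is bounded by $\rho(\omega)/r$. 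This reduces the claim to the one--dimensional, scale-invariant inequality $\phi(\rho)^2\rho^{\,d}\le C\int_0^{\rho}\phi(t)^2 t^{d-1}\,dt$ for $\phi\in P_p[0,\rho]$, i.e.\ after rescaling $\psi(1)^2\le C(p,d)\int_0^1\psi(\tau)^2\tau^{d-1}\,d\tau$ for $\psi\in P_p[0,1]$, which holds because the right-hand side is a genuine norm on the finite-dimensional space $P_p[0,1]$. Integrating over $\omega$ then gives $\|v\|_{L^2(\partial T)}^2\le C(p,d)r^{-1}\|v\|_{L^2(T)}^2$, which is \eqref{convex2}.

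I expect the main obstacle to be exactly the suppression of the shape dependence, concentrated in the two geometric facts above: the inclusion $\tfrac12 T\subseteq\{L(y)\ge r\}$ that lets Lemma \ref{lemma:homothety} absorb the contribution of the degenerate chords in \eqref{convex1}, and the incenter bound $\boldsymbol{n}\cdot\omega\ge r/\rho$ that controls the boundary Jacobian in \eqref{convex2}. Once these are in place, everything else is a reduction to one--dimensional polynomial inequalities whose constants depend only on $p$ and $d$, as required.
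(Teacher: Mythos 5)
Your proposal is correct, and for \eqref{convex1} it takes a genuinely different route from the paper. The paper proves \eqref{convex1} by invoking the result of Gruber and Schwarzkopf et al.\ that every convex body is sandwiched between a homothetic pair of boxes $B_2\subseteq T\subseteq B_1$ with a dimension-dependent homothety ratio, then transfers the standard inverse inequality from $B_1$ back to $T$ via Lemma \ref{lemma:homothety}. You instead slice $T$ into chords parallel to $e_j$, apply the one-dimensional Markov inequality chord by chord, and neutralize the degenerate chords through the inclusion $\tfrac12 T\subseteq T_j'=\{L(y)\ge r\}$, which you correctly justify by taking convex combinations of $x'\in T$ with $\pm r e_j\in \overline{B(0,r)}\subseteq T$; Lemma \ref{lemma:homothety} (applied to $\partial_j v\in\mathcal P_{p-1}(T)$, with $T_j'$ convex since the chord-length function is concave) then absorbs the thin part. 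This is self-contained --- it needs only the inscribed ball and the paper's own Lemma \ref{lemma:homothety}, with no appeal to the box-approximation theorem --- and it makes the appearance of $r$ completely transparent, at the cost of being somewhat longer. For \eqref{convex2} your argument is essentially the paper's: both decompose radially from the incenter and use that the supporting hyperplane at a boundary point is at distance at least $r$ from the incenter (your bound $\boldsymbol{n}\cdot\omega\ge r/\rho$ is exactly the paper's $G(\xi)\ge r$). The only differences are cosmetic but favorable: your polar-coordinate formulation handles all $d$ uniformly, whereas the paper writes out only $d=2$ and defers $d=3$, and you replace the derivative identity $v^2(\mathbf r(\xi))=2\int_0^1 sv(sv)_s\,\mathrm ds$ plus the $1$-d inverse inequality by the direct finite-dimensional norm equivalence $\psi(1)^2\le C(p,d)\int_0^1\psi^2\tau^{d-1}\,\mathrm d\tau$ on $P_p[0,1]$, which is an equally valid (and slightly cleaner) way to obtain the same one-dimensional estimate.
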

\begin{proof}
It was shown in \cite{gruber83,schwarzkopf98} that for any convex body $T\subset\mathbb{R}^d$, there exists a homothetic pair of boxes $B_1$ and $B_2$ such that $B_1\supseteq T\supseteq B_2$. Here, by ``box'' we mean a parallelepiped generated by $d$ orthogonal vectors. Furthermore, if we take the homothetic center as the origin such that $B_2=\lambda B_1$, then $\lambda$ is uniformly bounded from below in terms of $d$. By the scaling argument on the boxes and Lemma \ref{lemma:homothety}, we have
$$
\|\nabla v\|_{L^2(T)} \leq \|\nabla v\|_{L^2(B_1)} \lesssim \frac{1}{r} \|v\|_{L^2(B_1)}  \lesssim \frac{1}{r} \|v\|_{L^2(B_2)}\le \frac{1}{r} \|v\|_{L^2(T)},\qquad \forall v\in P_p(B_1),
$$
which gives the result of \eqref{convex1}.

Concerning the second inequality, we perform an analysis only for 2-$d$ convex domains. A similar argument can be made for 3-$d$ convex bodies following the guideline for Lemma \ref{lemma:homothety}. Let $\partial T$ be parameterized (piecewise) by $x=\mathbf{r}(\xi), \xi\in I$, and let $P$ be the center of the largest inscribed circle in $T$ (Figure \ref{fig:convexdomain}).
\begin{figure}
\centering\includegraphics[scale=.9]{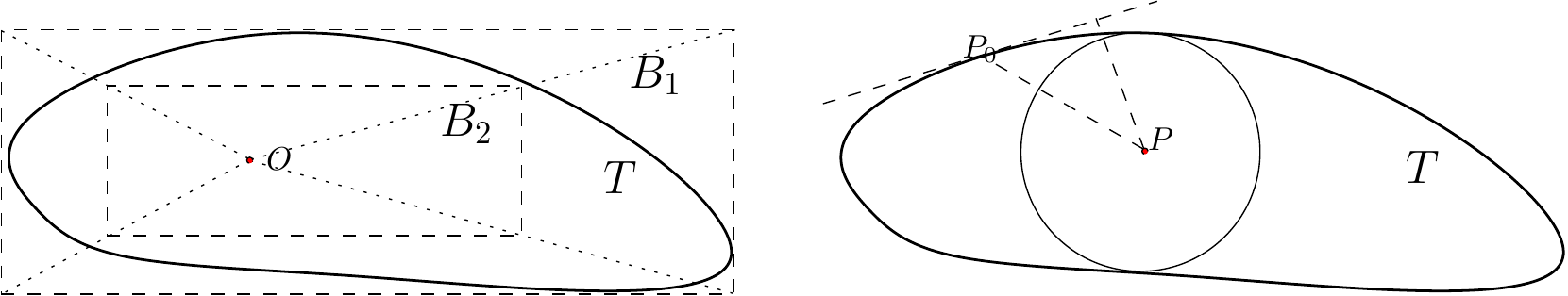}
\caption{$T$ is convex.}
\label{fig:convexdomain}
\end{figure}
$P$ is set as the origin, and $T$ is characterized by
$$T=\{x:\; x=s\cdot\mathbf{r}(\xi), \;s\in [0,1], \xi\in I\}.$$
Take $v\in \mathcal{P}_p(T)$ and consider its restriction on $\partial T$
as follows:
\begin{eqnarray} v^2({\bf r}(\xi))=
\int_0^1\frac{\partial}{\partial s}\big((s^2v^2(s\cdot {\bf r}(\xi))\big)\,\mathrm{d}
s=  2\int_0^1 sv(sv)_s\,\mathrm{d} s.\label{aaa}
\end{eqnarray}
Note that for any fixed $\xi$, $v$ is a polynomial whose degree does not exceeding $p$ with respect to $s$. Applying the following inverse inequality of 1-$d$ polynomial in $s$ (cf. \cite{schwab98}):
$$\|w'\|_{L^2([0,1])}\lesssim p^2\|w\|_{L^2([0,1])},\quad\forall w\in \mathcal{P}_p([0,1]),$$
we have
$v^2\lesssim p^2\int_0^1 v^2 s^2\,\mathrm{d} s \leq p^2\int_0^1 v^2 s\,\mathrm{d} s$. By integrating $v^2$ along $\partial T$, we find that
\begin{align*}
\|v\|_{L^2(\partial T)}^2&=\int_{I} v^2 |\mathbf{r}'(\xi)|\,\mathrm{d} \xi
\lesssim p^2\int_{I} \int_0^1 \,v^2(s\cdot \mathbf{r}(\xi))\, s\,|\mathbf{r}'(\xi)| \mathrm{d} s \mathrm{d} \xi\\
&\le p^2\sup_{\xi\in I}\frac{|\mathbf{r}'(\xi)|}{|\mathbf{r}(\xi)\times\mathbf{r}'(\xi)|}\|v\|_{L^2(T)}^2.
\end{align*}

We observe that $G(\xi):=\frac{|\mathbf{r}(\xi)\times\mathbf{r}'(\xi)|}{|\mathbf{r}'(\xi)|}$ corresponds to the distance from $P$ to the tangent line of $\partial T$ at point $P_0 = \mathbf{r(\xi)}$. As $T$ is convex, it always resides on one side of the tangent line. Therefore, $G(\xi)\ge r$ for any $\xi\in I$. Then, we derive that
$$\|v\|_{L^2(\partial T)}^2\lesssim \frac{p^2}{\inf_{\xi\in I}G(\xi)}
\|v\|_{L^2(T)}^2\le \frac{p^2}{r}\|v\|_{L^2(T)}^2,$$
which yields the conclusion of \eqref{convex2}.
\end{proof}

The crucial component in regard to establishing the stability of bilinear forms is the control on the weighted normal derivatives, which we state as a trace and inverse inequality in the following lemma. In \cite{hansbo02}, the validity of this inequality for $p=1$ leads to a heuristic choice of weight $\kappa_i=\frac{|K_i|}{|K|}$. Based on a slight modification of $\kappa_i$ defined in \eqref{kappa}, we extend the result to arbitrary polynomial degree $p$.

\begin{lemma}\label{ltrace}
Let $\gamma_0$ and $\gamma_1$ be constants defined in \eqref{gamma0} and \eqref{gamma1}, respectively. If we choose $c_0\ge 2\gamma_0\gamma_1$ in the definition \eqref{kappa} of $\kappa$, there exists a positive constant $h_0$ such that for all $h\in (0, h_0]$ and any interface segment/patch $e_K=K\cap \Gamma\in \mathcal{E}_h^\Gamma$, the following estimates hold on both sub-elements of $K$:
\begin{align}\label{ineq_main}
\|\kappa_i^{1/2}v_i\|_{L^2(e_K)} \leq \frac{C}{h_K^{1/2}} \|v_i\|_{L^2(K_i)},~~~~v_i\in \mathcal{P}_p(K_i),~~ i=1,2.
\end{align}
\end{lemma}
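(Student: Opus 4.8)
The plan is to reduce everything to the convex trace and inverse inequalities of Lemma \ref{lem:covex}, compensating the (possibly tiny) inradius of a degenerate sub-element by the weight $\kappa_i$. First I would dispose of the case $\kappa_i=0$, in which the left-hand side of \eqref{ineq_main} vanishes. In the two remaining cases one has $|K_i|/|K|\geq c_0 h_K$; since $0<\kappa_i\leq 1$ while $|K_i|/|K|>1-c_0h_K>\tfrac12$ whenever $\kappa_i=1$ (for $h$ small), in every case $\kappa_i\lesssim |K_i|/|K|$. Thus it suffices to prove the unweighted bound $\|v_i\|_{L^2(e_K)}^2\lesssim r^{-1}\|v_i\|_{L^2(K_i)}^2$, where $r$ is the inradius of a convex approximation of $K_i$, together with the geometric estimate $\tfrac{|K_i|}{|K|}\cdot\tfrac1r\lesssim \tfrac1{h_K}$.

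Because $K_i$ itself may fail to be convex when $\Gamma$ curves toward $\Omega_i$, I would replace it by flat cuts. Let $\Gamma_K$ be a hyperplane through $d$ points of $e_K$ as in Lemma \ref{aniso}, and let $D_i^{\pm}$ be the convex polytopes obtained by intersecting $K$ with the half-space on the $\Omega_i$-side of $\Gamma_K$, shifted outward/inward by the strip width $\delta$. By \eqref{gamma1}, $\delta\leq\gamma_1 h_K^2$, so $D_i^-\subseteq K_i\subseteq D_i^+$ and $|D_i^+\setminus D_i^-|\lesssim \delta\, h_K^{d-1}\lesssim \gamma_1 h_K^{d+1}$. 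On the other hand, $|K_i|\geq c_0 h_K|K|\geq (c_0/\gamma_0)h_K^{d+1}$ by \eqref{gamma0}; the choice $c_0\geq 2\gamma_0\gamma_1$ is exactly what guarantees that the strip is a controlled fraction of $|K_i|$, so that $|D_i^-|$, $|D_i^+|$ and $|K_i|$ are all comparable and, in particular, the homothety factor relating the nested caps $D_i^-\subseteq D_i^+$ stays bounded away from $0$ and $1$ independently of $h$. Consequently, Lemma \ref{lemma:homothety} applied to these comparable convex domains makes the $L^2(K_i)$-, $L^2(D_i^-)$- and $L^2(D_i^+)$-norms of the polynomial $v_i$ mutually equivalent.

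Now I would bound the curved trace $\|v_i\|_{L^2(e_K)}$ through the anisotropic trace inequality \eqref{trace2} of Lemma \ref{aniso}, controlling its two ingredients on the convex domain $D_i^+\supseteq K_i$ of inradius $r$: the inverse inequality \eqref{convex1} gives $|v_i|_{H^1(K_i)}\leq |v_i|_{H^1(D_i^+)}\lesssim r^{-1}\|v_i\|_{L^2(D_i^+)}$, and the convex trace inequality \eqref{convex2} controls $\|v_i\|_{L^2(\partial K_i\setminus e_K)}\leq \|v_i\|_{L^2(\partial D_i^+)}\lesssim r^{-1/2}\|v_i\|_{L^2(D_i^+)}$, since $\partial K_i\setminus e_K$ lies on the flat faces of $K$ bounding $D_i^+$. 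Feeding these into \eqref{trace2} and using the norm-equivalence yields $\|v_i\|_{L^2(e_K)}^2\lesssim r^{-1}\|v_i\|_{L^2(K_i)}^2$. The final ingredient is the geometric bound $|K_i|\asymp |D_i^+|\lesssim r\,h_K^{d-1}$: this follows from the homothetic sandwiching boxes $B_2\subseteq D_i^+\subseteq \lambda^{-1}B_2$ used in the proof of Lemma \ref{lem:covex}, since $r$ is at least half the shortest side of $B_2$ while the remaining sides are bounded by $\mathrm{diam}(D_i^+)\leq h_K$. Combining with $|K|\asymp h_K^d$ from \eqref{gamma0} gives $\tfrac{|K_i|}{|K|}\cdot\tfrac1r\lesssim \tfrac{h_K^{d-1}}{|K|}\lesssim \tfrac1{h_K}$, whence $\kappa_i\|v_i\|_{L^2(e_K)}^2\lesssim h_K^{-1}\|v_i\|_{L^2(K_i)}^2$, which is \eqref{ineq_main}.

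The main obstacle is handling the curved interface and the degeneracy of $K_i$ simultaneously. Unlike the linear case in \cite{hansbo02}, the gradient of $v_i$ is no longer constant, so the inverse inequality on the possibly very thin, non-convex $K_i$ cannot be obtained by a naive scaling argument; it must rest on the convex estimates of Lemma \ref{lem:covex} together with the volume–inradius relation. Keeping the constants sharp enough that the strip induced by the curvature of $\Gamma$ remains negligible against $|K_i|$ --- which is precisely where the threshold $c_0\geq 2\gamma_0\gamma_1$ and the smallness of $h$ enter --- is the delicate point; the rest is bookkeeping over comparable convex domains.
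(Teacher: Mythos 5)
Your overall strategy is the same as the paper's: flatten the curved cut into a pair of parallel hyperplane cuts bounding the strip of width $\delta\le\gamma_1 h_K^2$, pass to the nested convex caps (your $D_i^-\subseteq K_i\subseteq D_i^+$ are the paper's $T_2\subseteq \tilde K\subseteq T_1$), invoke the homothety norm-equivalence of Lemma \ref{lemma:homothety} to trade $\|v\|_{L^2(D_i^+)}$ for $\|v\|_{L^2(K_i)}$, control the trace via Lemma \ref{aniso} together with \eqref{convex1}--\eqref{convex2} on the outer cap, and close with $|D_i^+|\lesssim r\,h_K^{d-1}$ and $|K|\gtrsim h_K^d$. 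All of that bookkeeping is correct, including the observation $\kappa_i\lesssim |K_i|/|K|$ that lets you treat the $\kappa_i=1$ case by the same argument.

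The gap is in the sentence asserting that, because $|D_i^+\setminus D_i^-|\lesssim\gamma_1h_K^{d+1}\lesssim|K_i|$, ``the homothety factor relating the nested caps $D_i^-\subseteq D_i^+$ stays bounded away from $0$.'' Comparability of volumes of two nested convex bodies does \emph{not} by itself hand you a homothetic copy of the larger one inside the smaller one with a ratio depending only on $d$ and the volume ratio; Lemma \ref{lemma:homothety} takes the homothetic inclusion as a \emph{hypothesis}, and nothing else in the paper supplies it. The implication you are using is in fact a true statement of convex geometry (one can prove it via John's theorem together with a Steinhagen-type inradius--width--volume inequality), but it is a genuine theorem you would have to prove or cite, and it is precisely the crux of the lemma. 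The paper avoids it by an explicit construction: it shows that the threshold $|K_i|/|K|\ge c_0h_K$ with $c_0\ge 2\gamma_0\gamma_1$ forces the outer cap to reach a distance $d_{\Gamma_1}\ge 2\delta$ beyond the outer cutting plane, whence the dilation with ratio $\tfrac12$ about the farthest point $P_0$ maps $T_1$ into $T_2\subseteq K_i$, giving the needed homothetic subset with the concrete factor $\lambda=\tfrac12$. You should either substitute this explicit $P_0$-centered homothety for your volume-comparability sentence, or supply a self-contained proof of the convex-geometry fact you are invoking; as written, the central step is asserted rather than proved.
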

\begin{proof}
By the definition of the weight \eqref{kappa}, when $\frac{|K_i|}{|K|}<c_0 h_K$ or $\frac{|K_i|}{|K|}>1-c_0 h_K$, the result is either trivial or is reduced to a standard inverse inequality \cite{wu2010}. Thus, we need consider only the case where $\frac{|K_i|}{|K|}$ is bounded from below and above by $c_0 h_K$ and $1-c_0 h_K$.

\begin{figure}[htp]
\begin{center}
\includegraphics[scale=.8]{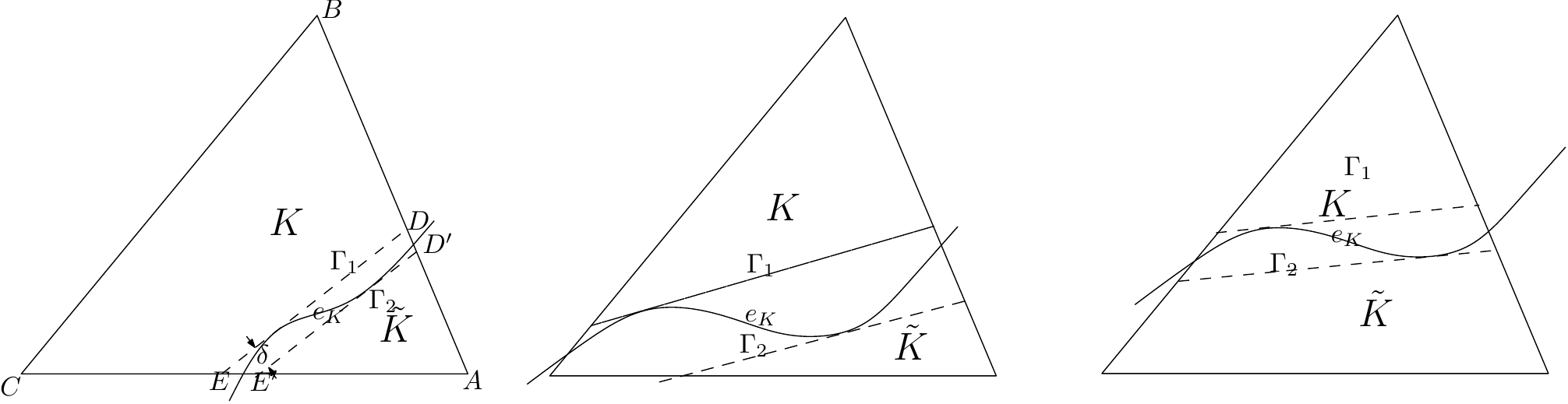}
\caption{A $2$-$d$ simplex intersected by $\Gamma$. $e_K=K\cap\Gamma$.}
\label{fig:simplex2d}
\end{center}
\end{figure}
\begin{figure}[htp]
\centering \subfigure[The intersection is a curved triangle.]{
\includegraphics[scale=.8]{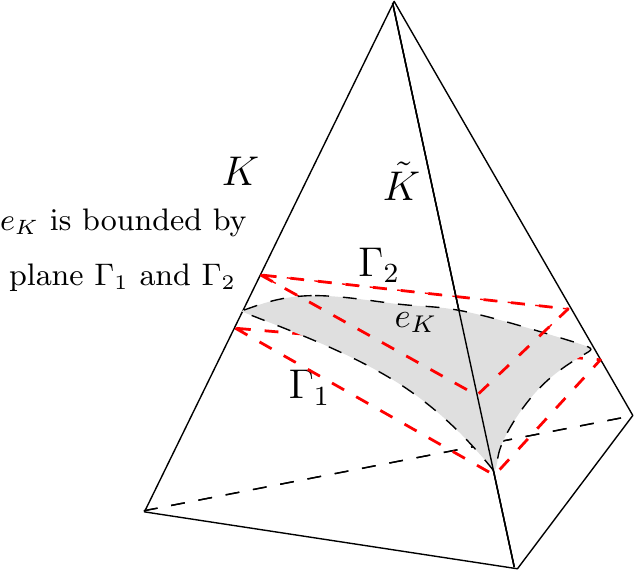}}\hspace{0.5in}
\subfigure[The intersection is a curved quadrilateral.]{
\includegraphics[scale=.8]{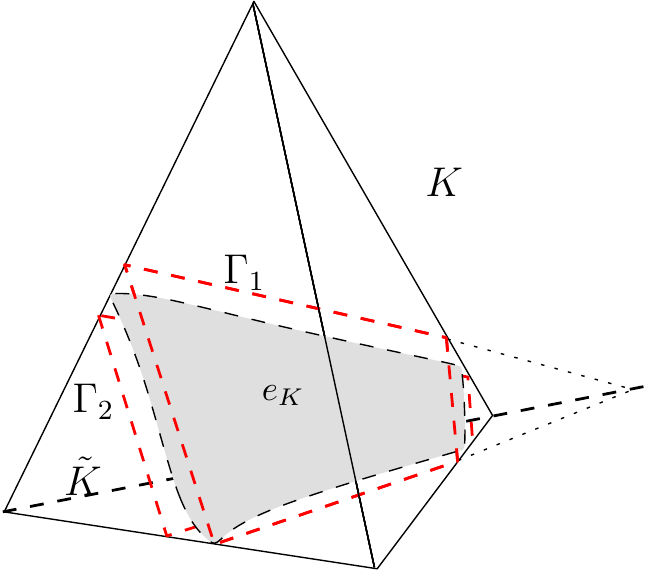}}
\caption{Intersection of $\Gamma$ with a $3$-$d$ simplex $K$. $\tilde K=K_1$ or $K_2$.}\label{fig:simplex3d}
\end{figure}

We recall that each interface segment/patch $e_K=K\cap\Gamma$ is contained in a strip of width $\delta$, which is not greater than $\gamma_0 h_K^2$.
Denote by $\Gamma_1$ and $\Gamma_2$ the two boundaries of the strip, which are parallel to a line/plane passing at least $d$ distinct points in $e_K$ (Figure \ref{fig:simplex2d}). Let $\tilde K=K_1$ or $K_2$ be a sub-element included in $K$. Each $\Gamma_i$ ($i=1,2$) divides $K$ into two polytopes. In these four polytopes, $T_1$ denotes the one includes $\tilde K$ and $T_2$ denotes the one included in $\tilde K$  (Figure \ref{fig:simplex2d} shows a $2$-$d$ example where $\tilde K$ is the sub-element bounded by $\Gamma$, $AB$, and $AC$, and we set $\Gamma_1=DE$,  $\Gamma_2=D'E'$, $T_1=\triangle ADE$, and $T_2=\triangle AD'E'$).

We know that the area/volume of $T_1$ can be expressed as the integration of the length/area of cross-sections along any given direction $\tau$. Take $\tau$ to be the normal vector to $\Gamma_1$. Let $d_{\Gamma_1}$ be the maximum distance from points in $T_1$ to $\Gamma_1$, and let $\delta$ denote the distance between $\Gamma_1$ and $\Gamma_2$. The measure of each cross section is less than $h^{d-1}$. Therefore, if $d_{\Gamma_1}< 2\delta$, we have that
$$|\tilde K|\le |T_1|\le d_{\Gamma_1}h_K^{d-1}< 2\delta h_K^{d-1}\le 2\gamma_0 h_K^{d+1}\le 2\gamma_0\gamma_1 h_K|K|\le c_0 h_K|K|.$$
In other words, the condition $\frac{|\tilde K|}{|K|}\ge c_0 h_K$ implies that $d_{\Gamma_1}\ge 2\delta$. That is, we need to justify \eqref{ineq_main} under this condition.
Suppose that $d_{\Gamma_1}$ is achieved at $P_0$, and let
$$T_0= \left\{\frac12 (P+P_0):\, P\in T_1\right\}.$$
On this basis, it is easy to verify that $T_0$ is included in $T_2$ when $d_{\Gamma_1}\ge 2\delta$. As $T_0$ is a homothetic copy of $T_1$ with a scaling factor of $\lambda=1/2$ and a homothetic center $P_0$, by Lemma \ref{lemma:homothety}, we have
\begin{align}\label{pullback2}
\|v\|_{L^2(T_1)}  \le C(1/2,p)\|v\|_{L^2(T_0)}\le C(1/2,p)\|v\|_{L^2(\tilde K)}, \quad \forall v\in \mathcal{P}_p(T_1).
\end{align}
In Figure \ref{fig:simplex2d}-\ref{fig:simplex3d}, we illustrate various intersections when a simplex $K$ is cut by a $(d-1)$-dimensional manifold $\Gamma$.

For any $v\in \mathcal{P}_p(\tilde K)$, we simply apply \eqref{trace2} and \eqref{pullback2} to obtain
\begin{align*}
\|v\|_{L^2(e_K)}^2  \lesssim &\|v\|_{L^2(\tilde K)}\|\nabla v\|_{L^2(\tilde K)}+\|v\|_{L^2(\partial \tilde K\backslash e_K)}^2\\
\le &\|v\|_{L^2(T_1)}\|\nabla v\|_{L^2(T_1)}+\|v\|_{L^2(\partial T_1)}^2\\
\lesssim &\frac{1}{r}\|v\|_{L^2(T_0)}^2\le \frac{1}{r}\|v\|_{L^2(\tilde K)}^2,
\end{align*}
where $r$ is the radius of the largest ball inscribed in $T_1$ and
\eqref{convex1} and \eqref{convex2} are used on $T_1$ in the last inequality. Since $|\tilde K|\le|T_1|\lesssim rh_K^{d-1}$, we obtain
\begin{align*}
\|\tilde\kappa^{1/2} v\|_{L^2(e_K)}  \lesssim & \left(\frac{|\tilde K|}{r|K|}\right)^{1/2}\|v\|_{L^2(\tilde K)}\lesssim \left(\frac{rh_K^{d-1}}{r|K|}\right)^{1/2}\|v\|_{L^2(\tilde K)}\lesssim h_K^{-\frac12}\|v\|_{L^2(\tilde K)},
\end{align*}
which completes the proof of Lemma \ref{ltrace}.
\end{proof}

The key step in the above proof is, roughly speaking, that of converting the original argument of \eqref{ineq_main} in $K_i$ to a variant in a possibly convex domain $T_0$ (with a straight/planar boundary) included in $K_i$. The estimate on $T_0$ is relatively easy to obtain with the help of Lemma \ref{lem:covex}.

We note that as an alternative, one simple choice of the element-wise defined average is to adopt $\kappa_i = 1$ if $|K_i|> \frac12|K|$ and $\kappa_i = 0$ if $|K_i|< \frac12|K|$. Thus, for an intersected element, we compute the numerical quantity only on the larger sub-element $K_i$  with $i = 1$ or $2$. However, the proof of Lemma \ref{ltrace} is also applicable to this weighting. We use the specific definition \eqref{kappa} of $\kappa_i$ because the proof of the trace inequality \eqref{ineq_main}, and likewise of the trace inequalties, \eqref{convex1} and \eqref{convex2} is of independent interest in its own right. Furthermore, the presence of $\kappa_i$ is essential to keeping the constant in \eqref{ineq_main} independent of the location of the interface relative to the mesh.

\section{Error Analysis}\label{sec:error}
\setcounter{equation}0

\subsection{Boundedness and stability of $B_h(\cdot,\cdot)$}
To consider the boundedness and stability of the primal forms $B_h^{(i)}(\cdot,\cdot)$, we define the following semi-norms and norms for $v\in V(h)$:
\begin{eqnarray}
&&|v|^2_{1,\Omega_1\cup\Omega_2} =  \|\alpha(x)^{1/2}\nabla v\|^2_{L^2(\Omega)}, \nonumber\\
&&|v|^2_{0,\mathcal{E}_h^\Gamma} = \sum_{K\in \mathcal{T}_h^\Gamma} \eta_\beta h_K^{-1}\|[v]\|^2_{L^2(e_K)},\quad
|v|^2_{0,\mathcal{T}_h^\Gamma} = \sum_{K\in\mathcal{T}_h^\Gamma}\eta\|\alpha(x)^{1/2}r_{e_K}([v])\|_{L^2(K)}^2 ,\nonumber\\
&&\|v\|^2_{B_h^{(1)}} = |v|^2_{1,\Omega_1\cup\Omega_2} + |v|^2_{0,\mathcal{E}_h^\Gamma}+ \sum_{K\in \mathcal{T}_h^\Gamma}\eta_\beta^{-1}
h_K \|\{\alpha(x) \nabla v\}\|^2_{L^2(e_K)},\label{norm}\\
&&\|v\|^2_{B_h^{(2)}} = |v|^2_{1,\Omega_1\cup\Omega_2} + |v|^2_{0,\mathcal{E}_h^\Gamma}+|v|^2_{0,\mathcal{T}_h^\Gamma},\qquad \mbox{with}~~ \beta=1.\label{norm2}
\end{eqnarray}
Here, $e_K = K\cap \Gamma$.

\begin{lemma}[Boundedness of $B_h^{(1)}(\cdot,\cdot)$] We have
\begin{equation}\label{bound}
B_h^{(1)}(w,v)\leq C_b \|w\|_{B_h^{(1)}}\,\|v\|_{B_h^{(1)}},
\quad\forall\, w,v \in V(h),
\end{equation}
where $C_b$ is a positive constant dependent only on $\beta$. Actually, we can make this upper-bound constant explicit with $C_b=(\sqrt{\beta^2+2\beta+2}+\sqrt{\beta^2-2\beta+2})/2$.
\end{lemma}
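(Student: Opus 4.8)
The plan is to decompose $B_h^{(1)}(w,v)$ into its four constituent pieces, bound each against the components of the norm $\|\cdot\|_{B_h^{(1)}}$ defined in \eqref{norm}, and then assemble these into a single Cauchy--Schwarz estimate in a finite-dimensional vector. First I would rewrite the bilinear form grouping the terms so that the symmetry structure in $\beta$ becomes transparent. The volume term $\int_{\Omega_1\cup\Omega_2}\alpha(x)\nabla w\cdot\nabla v$ is controlled directly by $|w|_{1,\Omega_1\cup\Omega_2}\,|v|_{1,\Omega_1\cup\Omega_2}$ via Cauchy--Schwarz. The penalty term $\sum_{K}\eta_\beta h_K^{-1}\int_{e_K}[w]\cdot[v]$ is controlled by $|w|_{0,\mathcal{E}_h^\Gamma}\,|v|_{0,\mathcal{E}_h^\Gamma}$, again by Cauchy--Schwarz on each $e_K$ followed by a discrete Cauchy--Schwarz over $K\in\mathcal{T}_h^\Gamma$.

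The two consistency/adjoint-consistency terms are the ones that force the appearance of both the $\eta_\beta^{1/2}h_K^{-1/2}$ and $\eta_\beta^{-1/2}h_K^{1/2}$ scalings in the norm, and they must be handled by the standard ``balanced'' splitting. For the term $-\int_{\Gamma}\{\alpha(x)\nabla w\}\cdot[v]$ I would, on each $e_K$, insert the factors $\eta_\beta^{-1/2}h_K^{1/2}$ and $\eta_\beta^{1/2}h_K^{-1/2}$ and apply Cauchy--Schwarz so that $\|\{\alpha\nabla w\}\|_{L^2(e_K)}$ pairs with the third norm component of $w$ and $\|[v]\|_{L^2(e_K)}$ pairs with $|v|_{0,\mathcal{E}_h^\Gamma}$. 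The symmetric partner $-\beta\int_{\Gamma}[w]\cdot\{\alpha(x)\nabla v\}$ is bounded in the mirror-image way, producing a factor of $|\beta|$.

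The main point — and the only genuinely delicate step — is the optimization of the constant. Rather than crudely summing the four bounds, I would assemble the estimate as an inner product of two vectors in $\mathbb{R}^3$ whose entries are $\bigl(|w|_{1,\Omega_1\cup\Omega_2},\,|w|_{0,\mathcal{E}_h^\Gamma},\,(\sum_K\eta_\beta^{-1}h_K\|\{\alpha\nabla w\}\|^2_{L^2(e_K)})^{1/2}\bigr)$ and the analogous vector for $v$, with the coupling encoded in a fixed $3\times 3$ matrix $M$ whose off-diagonal entries record which norm components multiply which (the entries being $1$ and $\beta$ according to the two cross terms). Then $B_h^{(1)}(w,v)\le \|M\|_2\,\|w\|_{B_h^{(1)}}\|v\|_{B_h^{(1)}}$, so $C_b$ is the spectral norm of $M$. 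The claimed value $C_b=(\sqrt{\beta^2+2\beta+2}+\sqrt{\beta^2-2\beta+2})/2$ is exactly the largest singular value of the relevant matrix, so verifying it reduces to computing the eigenvalues of $M^\top M$ explicitly; this is the routine but careful linear-algebra computation that I expect to be the crux of matching the explicit constant rather than merely producing some bound.
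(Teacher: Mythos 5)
Your proposal is correct and follows essentially the same route as the paper, whose proof is simply ``a direct consequence of the definitions \eqref{norm} and the Cauchy--Schwarz inequality''; your term-by-term pairing of the four pieces of $B_h^{(1)}$ with the three components of $\|\cdot\|_{B_h^{(1)}}$ is exactly that argument. Your spectral-norm refinement does correctly reproduce the stated constant (the nontrivial $2\times2$ block has largest singular value squared $\tfrac{\beta^2+2+\sqrt{\beta^4+4}}{2}$, which matches $C_b^2$), a detail the paper leaves implicit.
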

\begin{proof}
The inequality \eqref{bound} is a direct consequence of the definitions \eqref{norm} and the Cauchy-Schwarz inequality.
\end{proof}

Notice that norm \eqref{norm} is the natural choice for obtaining the boundedness of the bilinear form $B_h^{(1)}(\cdot,\cdot)$ in $V(h)$, whereas the similar continuity of $B_h^{(2)}(\cdot,\cdot)$ in norm \eqref{norm2} is only valid in the discrete space $V_h^\Gamma$, which is also a simple result of the Cauchy-Schwarz inequality.

The following lemma demonstrates the coercivity of $B_h^{(i)}(\cdot,\cdot)$ in its respective norm $\|\cdot\|^2_{B_h^{(i)}}$. Note that the second part of the results shows that scheme \eqref{scheme2} is ``parameter-friendly.''

\begin{lemma}[Stability of $B_h^{(i)}(\cdot,\cdot)$]\label{lem_stab}  There exists a constant $C_s^{(1)}>0$ such that
\begin{equation}
B_h^{(1)}(v,v)\geq C_s^{(1)} \|v\|^2_{B_h^{(1)}}, \quad \forall\, v \in V_h^\Gamma,
\label{stable}
\end{equation}
provided the penalty parameter $\eta_\beta$ is chosen sufficiently large. Moreover, if $\eta_1\geq 1$ and $\eta \geq 2$, there exists a constant $C_s^{(2)}>0$ such that
\begin{equation}
B_h^{(2)}(v,v)\geq C_s^{(2)} \|v\|^2_{B_h^{(2)}}, \quad \forall\, v \in V_h^\Gamma.
\label{stable2}
\end{equation}
Here, $C_s^{(i)}$ is a positive constant dependent only on the parameter $\eta_{\beta}$ or on the parameters $\eta_1$ and $\eta$.
\end{lemma}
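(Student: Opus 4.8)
The plan is to verify coercivity for each scheme by expanding $B_h^{(i)}(v,v)$ and controlling the two consistency (flux) terms that couple $\{\alpha(x)\nabla v\}$ with $[v]$ across $\Gamma$. For the IP form $B_h^{(1)}$ this reduction rests entirely on a weighted trace--inverse estimate for the fluxes, whereas for the lifting-based form $B_h^{(2)}$ the flux terms are rewritten \emph{exactly} through the defining identity \eqref{liftop} of $r_e$, which avoids any inverse inequality and is the source of the ``parameter-friendly'' feature.

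First I would establish the auxiliary estimate
\[
\sum_{K\in\mathcal{T}_h^\Gamma} h_K\,\|\{\alpha(x)\nabla v\}\|_{L^2(e_K)}^2 \;\lesssim\; |v|_{1,\Omega_1\cup\Omega_2}^2,\qquad \forall\, v\in V_h^\Gamma .
\]
Since $\{\alpha(x)\nabla v\}=\kappa_1\alpha(x)\nabla v_1+\kappa_2\alpha(x)\nabla v_2$ and each component of $\nabla v_i$ is a polynomial of degree $p-1$ on $K_i$, this follows by applying the inverse inequality \eqref{ineq_main} of Lemma \ref{ltrace} componentwise, using $\kappa_i^{1/2}\le1$ and the upper/lower bounds on $\alpha(x)$ to move the coefficient in and out. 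This is the main obstacle, and it is exactly why the nonstandard Lemma \ref{ltrace} with the weight \eqref{kappa} was developed: on a degenerate sub-element the usual scaled trace theorem fails, and only the $\kappa_i$-weighted bound keeps the constant independent of the interface location.

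With this in hand, for $B_h^{(1)}$ I would write $B_h^{(1)}(v,v)=|v|_{1,\Omega_1\cup\Omega_2}^2+|v|_{0,\mathcal{E}_h^\Gamma}^2-(1+\beta)\int_\Gamma\{\alpha(x)\nabla v\}\cdot[v]$ and bound the cross term by Cauchy--Schwarz followed by Young's inequality with a free parameter $\varepsilon$, splitting $\|\{\alpha(x)\nabla v\}\|_{L^2(e_K)}\,\|[v]\|_{L^2(e_K)}$ into $\varepsilon h_K\|\{\alpha(x)\nabla v\}\|_{L^2(e_K)}^2$ and $\varepsilon^{-1}h_K^{-1}\|[v]\|_{L^2(e_K)}^2$. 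The first is absorbed into $|v|_{1,\Omega_1\cup\Omega_2}^2$ via the auxiliary estimate, and the second into $|v|_{0,\mathcal{E}_h^\Gamma}^2$ after dividing by $\eta_\beta$. Choosing $\varepsilon$ to retain a fixed fraction of the gradient energy and then taking $\eta_\beta$ sufficiently large yields positive coefficients in front of $|v|_{1,\Omega_1\cup\Omega_2}^2$ and $|v|_{0,\mathcal{E}_h^\Gamma}^2$; the remaining norm term $\sum_K\eta_\beta^{-1}h_K\|\{\alpha(x)\nabla v\}\|_{L^2(e_K)}^2$ is again controlled by $\eta_\beta^{-1}|v|_{1,\Omega_1\cup\Omega_2}^2$, so $\|v\|_{B_h^{(1)}}^2$ is comparable to $|v|_{1,\Omega_1\cup\Omega_2}^2+|v|_{0,\mathcal{E}_h^\Gamma}^2$ and \eqref{stable} follows. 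For $\beta=-1$ the cross term cancels and $C_s^{(1)}$ is immediate.

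For the parameter-friendly form $B_h^{(2)}$, the crucial observation is that the broken gradient $\nabla v$, extended by zero outside $K$, lies in the test space $W_K$, because $\nabla v|_{K_i}\in[P_{p-1}(K_i)]^d\subset[P_p(K_i)]^d$. Taking $\boldsymbol{q}=[v]$ and $\boldsymbol{w}_h=\nabla v$ in \eqref{liftop} gives $-\int_{e_K}[v]\cdot\{\alpha(x)\nabla v\}=\int_K \alpha(x)\,r_{e_K}([v])\cdot\nabla v$, so that with $\beta=1$ one has $B_h^{(2)}(v,v)=|v|_{1,\Omega_1\cup\Omega_2}^2+2\sum_{K\in\mathcal{T}_h^\Gamma}\int_K \alpha(x)\,r_{e_K}([v])\cdot\nabla v+|v|_{0,\mathcal{E}_h^\Gamma}^2+|v|_{0,\mathcal{T}_h^\Gamma}^2$. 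Writing $R:=\sum_K\|\alpha(x)^{1/2}r_{e_K}([v])\|_{L^2(K)}^2$ (so $|v|_{0,\mathcal{T}_h^\Gamma}^2=\eta R$) and $a_\Gamma:=\sum_{K\in\mathcal{T}_h^\Gamma}\|\alpha(x)^{1/2}\nabla v\|_{L^2(K)}^2\le|v|_{1,\Omega_1\cup\Omega_2}^2$, a Cauchy--Schwarz bound on the cross term reduces coercivity to the positivity of the scalar quadratic form $a_\Gamma-2\sqrt{a_\Gamma R}+\eta R=(\sqrt{a_\Gamma}-\sqrt{R})^2+(\eta-1)R$. This form dominates $(1-\eta^{-1/2})(a_\Gamma+\eta R)$, and combining with the leftover nonnegative gradient energy on uncut elements and the jump term gives $B_h^{(2)}(v,v)\ge(1-\eta^{-1/2})\|v\|_{B_h^{(2)}}^2$; in particular $\eta\ge2$, together with $\eta_1\ge1$ entering only through the jump seminorm, makes $C_s^{(2)}=1-\eta^{-1/2}>0$ with no largeness condition on the parameters --- precisely the parameter-friendly property, obtained here without invoking any inverse inequality.
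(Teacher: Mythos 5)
Your proposal is correct and follows essentially the same route as the paper: the key flux estimate $\sum_K h_K\|\{\alpha(x)\nabla v\}\|_{L^2(e_K)}^2\lesssim |v|_{1,\Omega_1\cup\Omega_2}^2$ via Lemma \ref{ltrace} plus Cauchy--Schwarz and Young's inequality for $B_h^{(1)}$, and the exact lifting identity $-\int_{e}\{\alpha(x)\nabla v\}\cdot[v]=\int_K\alpha(x)\nabla v\cdot r_e([v])$ for $B_h^{(2)}$. Your completing-the-square bookkeeping for the second scheme is just a repackaging of the paper's $\epsilon$-Young step and even reproduces the same constant $C_s^{(2)}=1-1/\sqrt{2}$ at $\eta=2$.
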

\begin{proof}
We perform an analysis of the symmetric and non-symmetric variants of $B_h^{(1)}(\cdot,\cdot)$, that is, $\beta=\pm 1$. For the alternative $\beta$ in $B_h^{(1)}(\cdot,\cdot)$, the only difference in the stability analysis is the selection of parameter $\eta_\beta$ and the determination of the corresponding $C_s^{(1)}$. We omit the details.

For $\beta=1$, by the Cauchy-Schwarz inequality, we know that
\begin{align*}
\int_{e_K}\{\alpha(x) \nabla v\}\cdot [v]  \le
h_K^{\frac12} \|\{\alpha(x) \nabla v\}\|_{L^2(e_K)}\cdot h_K^{-\frac12}\|[v]\|_{L^2(e_K)}.
\end{align*}
Using the inequality $2ab\le\epsilon a^2+\frac{1}{\epsilon} b^2$, we deduce that
\begin{align*}
B_h^{(1)}(v,v) &=  |v|^2_{1,\Omega_1\cup\Omega_2} + |v|_{0,\mathcal{E}_h^\Gamma}^2 - 2\int_{\Gamma}\{\alpha(x) \nabla v\}\cdot [v] \\
&\geq  |v|^2_{1,\Omega_1\cup\Omega_2} + |v|_{0,\mathcal{E}_h^\Gamma}^2 - 2\sum_{K\in \mathcal{T}_h^\Gamma}h_K^{\frac12} \|\{\alpha(x) \nabla v\}\|_{L^2(e_K)}\cdot h_K^{-\frac12}\|[v]\|_{L^2(e_K)} \\
& \geq |v|^2_{1,\Omega_1\cup\Omega_2} + |v|_{0,\mathcal{E}_h^\Gamma}^2 - 2\left(\epsilon\sum_{K\in \mathcal{T}_h^\Gamma}\eta_1^{-1}h_K \|\{\alpha(x) \nabla v\}\|_{L^2(e_K)}^2 + \frac{|v|_{0,\mathcal{E}_h^\Gamma}^2}{4\epsilon}\right)\\
& = |v|^2_{1,\Omega_1\cup\Omega_2} + (1- \frac{1}{2\epsilon}) |v|_{0,\mathcal{E}_h^\Gamma}^2 - 2\epsilon\sum_{K\in \mathcal{T}_h^\Gamma}\eta_1^{-1}h_K \|\{\alpha(x) \nabla v\}\|_{L^2(e_K)}^2,
\end{align*}
where $\epsilon>0$ is an arbitrary constant number. To estimate the last term, we draw on Lemma \ref{ltrace} and obtain
\begin{eqnarray}
\sum_{K\in \mathcal{T}_h^\Gamma}h_K \|\{\alpha(x) \nabla v\}\|_{L^2(e_K)}^2\le 2 C \max_{x\in\Omega}\{\alpha(x)\}  |v|^2_{1,\Omega_1\cup\Omega_2}.\label{critical_est}
\end{eqnarray}
For any $\epsilon>1$, if we choose $\eta_1> 8\epsilon C\max_{x\in\Omega}\{\alpha(x)\}  $, we obtain
\begin{align}\label{stable0}
B_h^{(1)}(v,v) &\geq \frac12(|v|^2_{1,\Omega_1\cup\Omega_2} + |v|_{0,\mathcal{E}_h^\Gamma}^2) \ge C_s^{(1)} \|v\|^2_{B_h^{(1)}}.
\end{align}
This completes the proof for the case $\beta=1$. For $\beta=-1$, the result of \eqref{stable} follows from the identity $B_h^{(1)}(v, v)=|v|^2_{1,\Omega_1\cup\Omega_2} + |v|_{0,\mathcal{E}_h^\Gamma}^2$.

Concerning the second formulation, we observe that
$$-\int_{e} \{\alpha(x) \nabla v\}\cdot [v] = \int_{K} \alpha(x) \nabla v\cdot r_e([v]),$$
for any $e=K\cap\Gamma$ and that
\begin{align*}
B_h^{(2)}(v,v)&=\|v\|^2_{B_h^{(2)}}+2\sum_{e\in\mathcal{E}_h^\Gamma}\int_{\Omega}\alpha(x)\nabla v\cdot r_e([v]) \\
&\ge(1-\epsilon)|v|^2_{1,\Omega_1\cup\Omega_2} +(1-\frac{1}{\epsilon\eta})|v|^2_{0,\mathcal{T}_h^\Gamma}+|v|^2_{0,\mathcal{E}_h^\Gamma}.
\end{align*}
Then, \eqref{stable2} holds with $C_s^{(2)}=\min(1-\epsilon, 1-\frac{1}{\epsilon\eta})$. In particular, by choosing $\epsilon = \frac{1}{\sqrt{2}}$, and $\eta \geq 2$, we have $C_s^{(2)} = 1- \frac{1}{\sqrt{2}}$.
\end{proof}

\subsection{Approximation capability of $V_h^\Gamma$}
We want to show that the XFE space has optimal approximation quality for piecewise smooth functions $w\in H^p(\Omega_1\cup \Omega_2)$.
For this purpose, we construct an interpolant of $w$ by the nodal interpolants of $H^s$-extensions of $w_1$ and $w_2$ as follows. Let $s\ge 2$ be an integer and choose extension operators $E_i: H^s(\Omega_i)\mapsto H^s(\Omega)$ such that
\[(E_i w)|_{\Omega_i}=w \quad\text{ and }\quad \|E_i w\|_{H^s(\Omega)}\lesssim\|w\|_{H^s(\Omega_i)},\quad  i=1, 2.\]

Let $I_h$ be the standard nodal interpolation that is associated with $V_h$ and that satisfies (\cite{schwab98})
 \begin{equation}
   \|v-I_h v\|_{H^j(\Omega)}\le Ch^{\mu-j}\|v\|_{H^{s}(\Omega)}, \quad j=0,1,2,\label{standard_interpolation}
 \end{equation}
where $v\in H^s(\Omega)\cap H^1_0(\Omega)$ with $s\ge 2$ and $\mu=\min\{p+1, s\}$. Denote $E_i w$ by $\tilde{w}_i$, and define an interpolation of $w\in V$ to $V_h^\Gamma$ by
\begin{equation}\label{interp}
\Pi_h w = \chi_1 I_h \tilde{w}_1 + \chi_2 I_h \tilde{w}_2.
\end{equation}
We present an approximation error bound for the XFE space:
\begin{align}
\|w-\Pi_h w\|_{B_h^{(1)}}^2 + \|w-\Pi_h w\|_{B_h^{(2)}}^2 \lesssim h^{2(\mu-1)}|w|^2_{H^s(\Omega_1\cup\Omega_2)}.\label{interp_est}
\end{align}
For the proof of this result, we need to address the interpolation error along the interface.
Indeed, we can apply Lemma \ref{ltrace2} and obtain
\begin{align}
h_K \|\{\alpha(x)\nabla (w-\Pi_h w)\}\|^2_{L^2(e_K)}\lesssim
 & \sum_{i=1,2} h_K \|\chi_i\nabla (w-I_h w)\|^2_{L^2(e_K)}
= \sum_{i=1,2} h_K \|\nabla (\tilde{w}_i-I_h \tilde{w}_i)\|^2_{L^2(e_K)} \nonumber\\
\leq & \sum_{i=1,2} \left( \|\tilde{w}_i-I_h \tilde{w}_i\|^2_{H^1(K)}  + h_K^2 \|\tilde{w}_i-I_h \tilde{w}_i\|^2_{H^2(K)}\right)\label{item1}
\end{align}
and
\begin{align}
h_K^{-1}\|[w-\Pi_h w]\|^2_{L^2(e_K)} \leq & \sum_{i=1,2} h_K^{-1} \|\chi_i(w-I_h w)\|^2_{L^2(e_K)}
= \sum_{i=1,2} h_K^{-1} \|\tilde{w}_i-I_h \tilde{w}_i\|^2_{L^2(e_K)} \nonumber\\
\leq & \sum_{i=1,2} \left( \|\tilde{w}_i-I_h \tilde{w}_i\|^2_{L^2(K)}  + h_K^2 \|\tilde{w}_i-I_h \tilde{w}_i\|^2_{H^1(K)}\right).\label{item2}
\end{align}
Furthermore, we need the following property of the local lifting operator $r_e$ in order to address $|w-\Pi_h w|_{0,\mathcal{E}_h^\Gamma}$. We note that the reverse of the following inequality is not generally true, in particular, when one of sub-elements of $K$ degenerates.

\begin{lemma} There exists a positive constant $C$ independent of the relative position of $\Gamma$ respect to $K$ such that
\begin{align}
&\|r_e(\boldsymbol{q})\|_{L^2(K)} \le C h_K^{-\frac12}\|\boldsymbol{q}\|_{L^2(e)},\,\, \forall\, \boldsymbol{q}\in [L^2(e)]^2 \label{lift_est}
\end{align}
for each $e = K\cap \Gamma \in \mathcal{E}_h^\Gamma$.
\end{lemma}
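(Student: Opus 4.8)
The plan is to use the standard energy argument for lifting operators: test the defining relation \eqref{liftop} against the lifting itself, and then control the resulting interface term by the inverse trace inequality \eqref{ineq_main} of Lemma \ref{ltrace}. Since $r_e(\boldsymbol{q})$ belongs to $W_K$ by construction (and is well defined because $(\boldsymbol{u},\boldsymbol{v})\mapsto\int_K\boldsymbol{u}\cdot\alpha(x)\boldsymbol{v}$ is an inner product on the finite-dimensional space $W_K$), I may take $\boldsymbol{w}_h=r_e(\boldsymbol{q})$ in \eqref{liftop}, which gives
\[
\int_K \alpha(x)\,|r_e(\boldsymbol{q})|^2 = -\int_e \boldsymbol{q}\cdot\{\alpha(x)\,r_e(\boldsymbol{q})\}.
\]
Because $\alpha(x)$ is bounded below by a positive constant, the left-hand side dominates $\|r_e(\boldsymbol{q})\|_{L^2(K)}^2$ up to a constant, while the Cauchy--Schwarz inequality on $e$ bounds the right-hand side by $\|\boldsymbol{q}\|_{L^2(e)}\,\|\{\alpha(x)\,r_e(\boldsymbol{q})\}\|_{L^2(e)}$.

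The crux is to estimate the weighted average $\|\{\alpha(x)\,r_e(\boldsymbol{q})\}\|_{L^2(e)}$ by $h_K^{-1/2}\|r_e(\boldsymbol{q})\|_{L^2(K)}$ with a constant independent of how $\Gamma$ cuts $K$. Writing $\{\alpha(x)\,r_e(\boldsymbol{q})\}=\kappa_1\alpha(x)\,(r_e(\boldsymbol{q}))_1+\kappa_2\alpha(x)\,(r_e(\boldsymbol{q}))_2$ as in \eqref{jump_ave_vector} and recalling that each $\kappa_i$ is constant on $K$ with $0\le\kappa_i\le1$, I would use $\kappa_i=\kappa_i^{1/2}\kappa_i^{1/2}\le\kappa_i^{1/2}$ to bound each term by $\alpha_{\max}\,\|\kappa_i^{1/2}(r_e(\boldsymbol{q}))_i\|_{L^2(e)}$, where $\alpha_{\max}=\max_{x\in\Omega}\alpha(x)$. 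Since the restriction of $r_e(\boldsymbol{q})$ to $K_i$ lies in $[\mathcal{P}_p(K_i)]^d$, the key estimate \eqref{ineq_main} applies to each scalar component and, summing the components in quadrature, yields $\|\kappa_i^{1/2}(r_e(\boldsymbol{q}))_i\|_{L^2(e)}\lesssim h_K^{-1/2}\|(r_e(\boldsymbol{q}))_i\|_{L^2(K_i)}$. Adding the contributions from $i=1,2$ then gives $\|\{\alpha(x)\,r_e(\boldsymbol{q})\}\|_{L^2(e)}\lesssim h_K^{-1/2}\|r_e(\boldsymbol{q})\|_{L^2(K)}$.

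Combining the two steps produces
\[
\|r_e(\boldsymbol{q})\|_{L^2(K)}^2 \lesssim h_K^{-1/2}\,\|\boldsymbol{q}\|_{L^2(e)}\,\|r_e(\boldsymbol{q})\|_{L^2(K)},
\]
and dividing by $\|r_e(\boldsymbol{q})\|_{L^2(K)}$ (the case $r_e(\boldsymbol{q})=0$ being trivial) yields \eqref{lift_est}. The main obstacle, and the reason the weighted average is indispensable here, is exactly the middle step: a naive trace inequality on $K_i$ would carry a constant depending on the shape of the possibly degenerate sub-element, and hence on the position of $\Gamma$ relative to $K$. The purpose of the weight $\kappa_i$ in \eqref{kappa} is that pairing $\kappa_i^{1/2}$ with the trace restores an interface-independent constant through \eqref{ineq_main}; everything else reduces to Cauchy--Schwarz and the uniform two-sided bounds on $\alpha(x)$. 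I note also that the one-sidedness flagged before the statement is consistent with this argument, as the bound runs from $\|r_e(\boldsymbol{q})\|_{L^2(K)}$ to $\|\boldsymbol{q}\|_{L^2(e)}$ and cannot in general be reversed when a sub-element degenerates.
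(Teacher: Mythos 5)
Your proof is correct and follows essentially the same route as the paper: test \eqref{liftop} with $\boldsymbol{w}_h=r_e(\boldsymbol{q})$, apply Cauchy--Schwarz on $e$, and control $\|\{\alpha(x)r_e(\boldsymbol{q})\}\|_{L^2(e)}$ via the weighted trace--inverse inequality \eqref{ineq_main}. The extra detail you supply on splitting the weighted average via $\kappa_i\le\kappa_i^{1/2}$ and on the two-sided bounds for $\alpha(x)$ is exactly what the paper leaves implicit.
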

\begin{proof}
We take $\boldsymbol{w}_h=r_e(\boldsymbol{q})$ in \eqref{liftop} and find
$$\|\alpha(x)^{1/2}r_e(\boldsymbol{q})\|_{L^2(K)}^2\le\|\boldsymbol{q}\|_{L^2(e)}\|\{\alpha(x)r_e(\boldsymbol{q})\}\|_{L^2(e)}\le C h_K^{-\frac12}\|\boldsymbol{q}\|_{L^2(e)}\|r_e(\boldsymbol{q})\|_{L^2(K)},$$
where the last inequality follows from \eqref{ineq_main}.
\end{proof}

From \eqref{item1}, \eqref{item2}, and \eqref{lift_est}, the estimates of the edge terms are reduced to those of bulk terms, which then follow the standard interpolation arguments \eqref{standard_interpolation}. We point out that \eqref{trace} can be modified by replacing $K$ in the right-hand side by its larger sub-element $K_i$ with $i=1$ or $2$. Thus, the alternative definition of $\kappa_i$ from \eqref{kappa} is possible \cite{hansbo05, wu2010} for many other choices. We emphasize that \eqref{trace} leads to a uniform constant hidden in $\lesssim$ of the interpolation estimates \eqref{interp_est}.

\subsection{Error estimates}

To summarize, we have the following error estimate for each scheme in its respective norm.
\begin{theorem}\label{thm1}
Assume that the interface $\Gamma$ is $C^2$ smooth and that the solution of the elliptic interface problem \eqref{interface_problem} satisfies $u\in H^s(\Omega_1\cup\Omega_2)$, where $s\ge 2$ is an integer. Let $\mu= \min\{p+1,s\}$. The following error estimates hold for any $h\in (0, h_0]$.
  \begin{enumerate}
    \item[{\rm(i)}] If $\eta_\beta$ is chosen sufficiently large (see \eqref{stable}) and $u_h$ is the solution to the first scheme of \eqref{dg_primal}, then
    \begin{equation}\label{errorcs}
    \|u-u_h\|_{B_h^{(1)}} \lesssim h^{\mu-1}
    \|u\|_{H^s(\Omega_1\cup\Omega_2)},\quad \forall\,0<h\le h_0.
    \end{equation}
    \item[{\rm(ii)}] For any given $\eta_1\geq 1$ and $\eta \geq 2$ with $u_h$ as the solution to the second scheme of \eqref{dg_primal}, we have
    \begin{equation}\label{errorcs2}
    \|u-u_h\|_{B_h^{(2)}} \lesssim  h^{\mu-1}
    \|u\|_{H^s(\Omega_1\cup\Omega_2)},\quad \forall\,0<h\le h_0.
    \end{equation}
  \end{enumerate}
The hidden constants in the above estimates are dependent on the angle condition of the mesh $\mathcal{T}_h$, the degree of the polynomials, the parameter in the scheme, and $\alpha(x)$, but are independent of the location of the interface relative to the mesh. Here, the constant $h_0$ is from Lemma~\ref{ltrace}.

\end{theorem}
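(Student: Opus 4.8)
The plan is to run the by-now-classical variational argument — coercivity, Galerkin orthogonality, boundedness, and the interpolation estimate — but to handle the two schemes separately because $B_h^{(2)}(\cdot,\cdot)$ is continuous only on the discrete space, as noted after \eqref{norm2}. In both cases I would begin from the error splitting
$$u - u_h = (u - \Pi_h u) + (\Pi_h u - u_h),$$
set $\xi_h := \Pi_h u - u_h \in V_h^\Gamma$, and reduce everything to estimating the discrete part $\xi_h$.

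For part (i), coercivity \eqref{stable} gives $C_s^{(1)}\|\xi_h\|_{B_h^{(1)}}^2 \le B_h^{(1)}(\xi_h,\xi_h)$. Writing $\xi_h = (\Pi_h u - u) + (u - u_h)$ in the first slot and invoking the Galerkin orthogonality \eqref{dg_ortho} annihilates the $u-u_h$ contribution, so $B_h^{(1)}(\xi_h,\xi_h) = B_h^{(1)}(\Pi_h u - u,\xi_h)$. Because \eqref{bound} provides boundedness of $B_h^{(1)}$ on all of $V(h)\times V(h)$ and $\Pi_h u - u \in V(h)$, the right-hand side is at most $C_b\|\Pi_h u - u\|_{B_h^{(1)}}\|\xi_h\|_{B_h^{(1)}}$; cancelling one factor and applying the triangle inequality yields $\|u-u_h\|_{B_h^{(1)}} \le (1 + C_b/C_s^{(1)})\|u - \Pi_h u\|_{B_h^{(1)}}$, and \eqref{interp_est} closes the estimate \eqref{errorcs}.

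The second scheme needs an upgraded continuity step, since $B_h^{(2)}$ is \emph{not} bounded on $V(h)$ in the $\|\cdot\|_{B_h^{(2)}}$-norm. Using \eqref{stable2} and \eqref{dg_ortho} exactly as above reduces matters to bounding the mixed quantity $B_h^{(2)}(\Pi_h u - u, \xi_h)$ with $\xi_h$ discrete but the first argument merely in $V$. The one term not immediately dominated by $\|\cdot\|_{B_h^{(2)}}$ is $-\int_\Gamma\{\alpha\nabla(\Pi_h u - u)\}\cdot[\xi_h]$, which I would treat by Cauchy--Schwarz as $\sum_K h_K^{1/2}\|\{\alpha\nabla(\Pi_h u - u)\}\|_{L^2(e_K)}\cdot h_K^{-1/2}\|[\xi_h]\|_{L^2(e_K)}$ and recognize the first factor as precisely the extra term $\sum_K \eta_\beta^{-1}h_K\|\{\alpha\nabla v\}\|_{L^2(e_K)}^2$ that appears in $\|\cdot\|_{B_h^{(1)}}$ but is absent from $\|\cdot\|_{B_h^{(2)}}$; the second factor is controlled by $|\xi_h|_{0,\mathcal{E}_h^\Gamma}$. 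The remaining terms are routine: $-\int_\Gamma[\Pi_h u - u]\cdot\{\alpha\nabla\xi_h\}$ uses \eqref{critical_est} to trade $h_K\|\{\alpha\nabla\xi_h\}\|^2_{L^2(e_K)}$ for $|\xi_h|^2_{1,\Omega_1\cup\Omega_2}$ (legitimate since $\nabla\xi_h$ is a polynomial), the jump-penalty and lifting terms pair off in $|\cdot|_{0,\mathcal{E}_h^\Gamma}$ and $|\cdot|_{0,\mathcal{T}_h^\Gamma}$, and the bulk term is handled by $|\cdot|_{1,\Omega_1\cup\Omega_2}$. This establishes the generalized continuity $B_h^{(2)}(\Pi_h u - u, \xi_h) \lesssim \|\Pi_h u - u\|_{B_h^{(1)}}\|\xi_h\|_{B_h^{(2)}}$, i.e. the generalized Céa's lemma, whence $\|\xi_h\|_{B_h^{(2)}} \lesssim \|\Pi_h u - u\|_{B_h^{(1)}}$ and, by the triangle inequality, $\|u - u_h\|_{B_h^{(2)}} \lesssim \|u - \Pi_h u\|_{B_h^{(1)}} + \|u - \Pi_h u\|_{B_h^{(2)}}$; the single estimate \eqref{interp_est} supplies both norms at once, giving \eqref{errorcs2}.

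I expect the main obstacle to be exactly this mixed-continuity step for $B_h^{(2)}$: one must verify that measuring the consistency error in the stronger $\|\cdot\|_{B_h^{(1)}}$-norm is unavoidable and legitimate, in particular that the boundary-gradient term \emph{cannot} be absorbed into the lifting seminorm $|\cdot|_{0,\mathcal{T}_h^\Gamma}$, since the reverse of \eqref{lift_est} fails for non-discrete arguments when a sub-element degenerates, and that \eqref{interp_est} was deliberately stated so as to furnish both norms simultaneously. The uniformity of all hidden constants with respect to the position of $\Gamma$ relative to the mesh is inherited from Lemma~\ref{ltrace} and the trace inequality \eqref{trace}, and the restriction $h \le h_0$ enters only through those lemmas.
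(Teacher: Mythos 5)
Your proposal is correct and follows essentially the same route as the paper: coercivity plus Galerkin orthogonality for the discrete part, boundedness of $B_h^{(1)}$ on $V(h)$ for scheme (i), and for scheme (ii) the same generalized continuity estimate $B_h^{(2)}(w,v_h)\lesssim\bigl(\|w\|_{B_h^{(1)}}^2+|w|_{0,\mathcal{T}_h^\Gamma}^2\bigr)^{1/2}\|v_h\|_{B_h^{(2)}}$ leading to \eqref{inf-sup}, closed by \eqref{interp_est}. The only cosmetic difference is that the paper controls the term $-\int_\Gamma[w]\cdot\{\alpha\nabla v_h\}$ by rewriting it through the lifting operator \eqref{liftop} rather than via \eqref{critical_est}, but both rest on Lemma~\ref{ltrace} and are interchangeable here.
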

\begin{proof}
Let $\Pi_h u\in V_h^\Gamma$ be the interpolant of $u$ as defined in \eqref{interp}. We recall the stability \eqref{stable} and \eqref{stable2} of the bilinear form $B_h^{(i)}(\cdot, \cdot)$. Denote by $B_h(\cdot,\cdot) = B_h^{(i)} (\cdot,\cdot)$ and $C_s=C_s^{(i)}$ with $i = 1, 2$, and we have
\begin{equation}
C_s\|\Pi_h u-u_h\|_{B_h}^2 \le B_h(\Pi_h u-u_h,\Pi_h u-u_h) = B_h(\Pi_h u-u,\Pi_h u-u_h),
\label{err1}
\end{equation}
where we use the Galerkin orthogonality \eqref{dg_ortho} to derive the last identity.

The error estimate for the first scheme follows from the boundedness \eqref{bound} of $B_h^{(1)}(\cdot, \cdot)$ and the triangle inequality
\begin{equation}
\|u-u_h\|_{B_h^{(1)}} \le\|u-\Pi_h u\|_{B_h^{(1)}}+\|\Pi_h u-u_h\|_{B_h^{(1)}} \leq (1+C_b/C_s^{(1)}) \| u-\Pi_h u\|_{B_h^{(1)}}.
\end{equation}
Thus, \eqref{errorcs} is the consequence of \eqref{interp_est}.

To derive the error estimate for the second scheme, we observe that for $w\in V$ and $v_h\in V_h^\Gamma$,
\begin{align}
B_h^{(2)}(w,v_h)=&\int_{\Omega_1\cup\Omega_2}\alpha(x) \nabla w \cdot \nabla v_h
-\int_{\Gamma} \{\alpha(x)\nabla w\}\cdot [v_h]
+\sum_{e\in\mathcal{E}_h^\Gamma}\int_{\Omega}\alpha(x)\nabla v_h \cdot r_e([w])\nonumber\\
&+\sum_{K\in\mathcal{T}_h^\Gamma}\frac{\eta_1}{h_K}\int_{K\cap\Gamma}[w]\cdot [v_h]+\sum_{e\in\mathcal{E}_h^\Gamma}\int_{\Omega}\eta\alpha(x) r_e([w])\cdot r_e([v_h])\nonumber\\
\le&C \left(\|w\|_{B_h^{(1)}}^2+|w|^2_{0,\mathcal{T}_h^\Gamma}\right)^{\frac12}\|v_h\|_{B_h^{(2)}}.\label{bound2*}
\end{align}
Instead of using the boundedness of the bilinear form on $V(h)$, which is not generally true for $B_h^{(2)}(\cdot,\cdot)$ in the norm $\|\cdot\|_{B_h^{(2)}}$, we substitute $w=\Pi_h u-u$ and $v_h=\Pi_h u-u_h$ in \eqref{bound2*} and obtain that
\begin{align}
\|u-u_h\|_{B_h^{(2)}}\le C\left(\|u-\Pi_h u\|_{B_h^{(1)}}+\|u-\Pi_h u\|_{B_h^{(2)}}\right).\label{inf-sup}
\end{align}
Then, the proof is completed from the interpolation error bound \eqref{interp_est}.
\end{proof}

We derive the optimal order $L^2$-error estimate for the first scheme when $\beta=1$ by using Nitsche's duality argument (cf. \cite{ciarlet78}). The $L^2$-error estimate for the second scheme follows a similar procedure plus a variant of \eqref{bound2*}. We omit the details.

Consider an auxiliary function $w$ as the solution to the adjoint problem
\begin{equation}\label{ePD}\left\{
\begin{aligned}
            & - \nabla\cdot\big(\alpha(x) \nabla w\big)  =  u-u_h,   &\text{ in }\Omega_1\cup\Omega_2,\\
            & [w]=0, \quad  [\alpha(x) \nabla w]=0, &\text{ on } \Gamma, \\
            &  w =  0, &\text{ on } \partial\Omega.
\end{aligned}\right.
\end{equation}
As $\Omega$ is convex, elliptic regularity gives (cf. \cite{b70})
\begin{equation}\label{ewsta}
\|w\|_{H^2(\Omega_1\cup\Omega_2)}\lesssim \|u-u_h\|_{L^2(\Omega)}.
\end{equation}

 \begin{theorem}\label{thml2} Under the conditions of Theorem~\ref{thm1}, the following estimate holds for the first scheme when $\beta=1$:
 \begin{equation*}
    \|u-u_h\|_{L^2(\Omega)}\lesssim h^{\mu}
    \|u\|_{H^s(\Omega_1\cup\Omega_2)},\quad \forall\,0<h\le h_0.
  \end{equation*}
 \end{theorem}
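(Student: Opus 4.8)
The plan is to run the Aubin--Nitsche duality argument, for which the adjoint problem \eqref{ePD} and its regularity bound \eqref{ewsta} are already in place; recall its solution satisfies $w\in H^2(\Omega_1\cup\Omega_2)\subset V(h)$ together with the transmission conditions $[w]=0$ and $[\alpha(x)\nabla w]=0$ on $\Gamma$. The first and decisive step is to establish the error representation $\|u-u_h\|_{L^2(\Omega)}^2 = B_h^{(1)}(u-u_h,w)$. To obtain it, I would test the adjoint equation $-\nabla\cdot(\alpha(x)\nabla w)=u-u_h$ against $u-u_h$ over $\Omega_1\cup\Omega_2$ and integrate by parts on each subdomain. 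The inter-element contributions cancel, since $w\in H^2(\Omega_1\cup\Omega_2)$ has continuous flux across edges not lying on $\Gamma$ and $u-u_h$ is continuous inside each $\Omega_i$, while the $\partial\Omega$ term drops because $u-u_h=0$ there. On $\Gamma$ I would invoke the jump--average identity stated before \eqref{consistency} with the two transmission conditions, which annihilates the $\{u-u_h\}[\alpha(x)\nabla w]$ and $(\kappa_2-\kappa_1)(\cdots)[\alpha(x)\nabla w]$ terms, leaving precisely $\int_{\Omega_1\cup\Omega_2}\alpha(x)\nabla w\cdot\nabla(u-u_h)-\int_{\Gamma}\{\alpha(x)\nabla w\}\cdot[u-u_h]$. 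Because $[w]=0$, both the penalty term and the $\beta$-term in \eqref{IP_blinear_first} carry a vanishing factor, so this coincides with $B_h^{(1)}(w,u-u_h)$; and since $\beta=1$ makes $B_h^{(1)}(\cdot,\cdot)$ symmetric, it equals $B_h^{(1)}(u-u_h,w)$.

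The second step is standard. Using Galerkin orthogonality \eqref{dg_ortho} to subtract $B_h^{(1)}(u-u_h,\Pi_h w)=0$, with $\Pi_h w$ the interpolant \eqref{interp}, gives $\|u-u_h\|_{L^2(\Omega)}^2 = B_h^{(1)}(u-u_h,\,w-\Pi_h w)$. Applying the boundedness \eqref{bound} bounds this by $C_b\,\|u-u_h\|_{B_h^{(1)}}\,\|w-\Pi_h w\|_{B_h^{(1)}}$. For the dual factor I would use the interpolation estimate \eqref{interp_est} for $w$ with $s=2$, so that $\min\{p+1,2\}=2$ for $p\ge 1$, yielding $\|w-\Pi_h w\|_{B_h^{(1)}}\lesssim h\,|w|_{H^2(\Omega_1\cup\Omega_2)}\lesssim h\,\|u-u_h\|_{L^2(\Omega)}$ after inserting \eqref{ewsta}. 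For the primal factor I would quote the energy estimate \eqref{errorcs} of Theorem \ref{thm1}, namely $\|u-u_h\|_{B_h^{(1)}}\lesssim h^{\mu-1}\|u\|_{H^s(\Omega_1\cup\Omega_2)}$. Combining the three bounds and dividing through by $\|u-u_h\|_{L^2(\Omega)}$ produces the claimed order $h^{\mu}$.

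The main obstacle is the first step, i.e.\ verifying the error representation $\|u-u_h\|_{L^2(\Omega)}^2 = B_h^{(1)}(u-u_h,w)$: this is where the adjoint consistency must be checked carefully, and it hinges on three ingredients acting simultaneously --- the jump--average identity, the two transmission conditions of the dual solution, and the symmetry $\beta=1$. It is precisely the symmetry that lets me swap the arguments of $B_h^{(1)}$ so that Galerkin orthogonality can be exploited in the second slot. Everything afterward is the routine duality bookkeeping, in which the $O(h^{\mu-1})$ energy bound for the primal error is multiplied by the $O(h)$ interpolation bound for the $H^2$-regular dual solution, giving the customary gain of one power of $h$.
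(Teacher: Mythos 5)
Your proposal is correct and follows essentially the same route as the paper: the Aubin--Nitsche duality argument with the adjoint problem \eqref{ePD}, the error representation $\|u-u_h\|_{L^2(\Omega)}^2=B_h^{(1)}(u-u_h,w)$ obtained from the transmission conditions and symmetry ($\beta=1$), Galerkin orthogonality to insert $\Pi_h w$, and the combination of boundedness \eqref{bound}, the interpolation estimate \eqref{interp_est} with the regularity bound \eqref{ewsta}, and the energy estimate \eqref{errorcs}. The only difference is that you spell out the adjoint-consistency computation that the paper compresses into the single line \eqref{et21}.
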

\begin{proof}Let $\theta=u-u_h$. Testing \eqref{ePD} by $\theta$ and using \eqref{dg_ortho}, we obtain
\begin{align}\label{et21}
    \|\theta\|_{L^2(\Omega)}^2=B_h^{(1)}(w,\theta)=B_h^{(1)}(\theta,w)=B_h^{(1)}(\theta,w-\Pi_h w),
\end{align}
where $\Pi_h w\in V_h^\Gamma$ satisfies the estimate (cf. \eqref{interp_est})
\begin{equation}\label{et22}
    \|w-\Pi_h w\|_{B_h^{(1)}}\lesssim  h|w|_{H^2(\Omega_1\cup\Omega_2)}.
\end{equation}
  Therefore, from \eqref{bound} and \eqref{ewsta},
\begin{align*}
    \|\theta\|_{L^2(\Omega)}^2\le C_b\|w-\Pi_h w\|_{B_h^{(1)}}\|\theta\|_{B_h^{(1)}}\lesssim h\|\theta\|_{L^2(\Omega)}\|\theta\|_{B_h^{(1)}},
\end{align*}
that is $\|\theta\|_{L^2(\Omega)}\lesssim h\|\theta\|_{B_h^{(1)}}$, which by using \eqref{errorcs} completes the proof of Theorem~\ref{thml2}.
\end{proof}

\section{Numerical examples}\label{sec:numerics}
\setcounter{equation}0

To test the numerical methods, we consider the following example. Let domain $\Omega$ be the unit square $(0,1)\times(0,1)$ and interface $\Gamma$ be the zero level set of the function $\phi(x)=(x_1-0.5)^2+(x_2-0.5)^2-1/8$ so that the subdomain $\Omega_1$ is characterized by $\phi(x)<0$ and $\Omega_2$ by $\phi(x)>0$. We use the Cartesian grids to partition the domain $\Omega$ into squares of the same size $h$. Let the exact solution be
\begin{align*}
u(x)=\left\{\begin{array}{ll}
			 1/\alpha_1\exp(x_1x_2),& x\in\Omega_1,\\
			 1/\alpha_2\sin(\pi x_1)\sin(\pi x_2),&x\in\Omega_2.	
			\end{array}\right.
\end{align*}
The right-hand side can be computed accordingly.

We examine the $h$-convergence rate of the first numerical scheme with $\beta=1$, that is, the symmetric case, and choose the parameter $\eta_1=20$ in all cases.
Theorems~\ref{thm1} and \ref{thml2} imply that
\begin{align*}
|u-u_h|_{1,\Omega_1\cup\Omega_2}=\left(\sum_{i=1}^2\|\alpha(x)^{1/2}\nabla (u-u_h)\|_{L^2(\Omega_i)}^2\right)^{1/2}\lesssim C h^p,\qquad
\|u-u_h\|_{L^2(\Omega)}\lesssim C h^{p+1}.
\end{align*}

\begin{figure}[htp]
\centering
\includegraphics[scale=1]{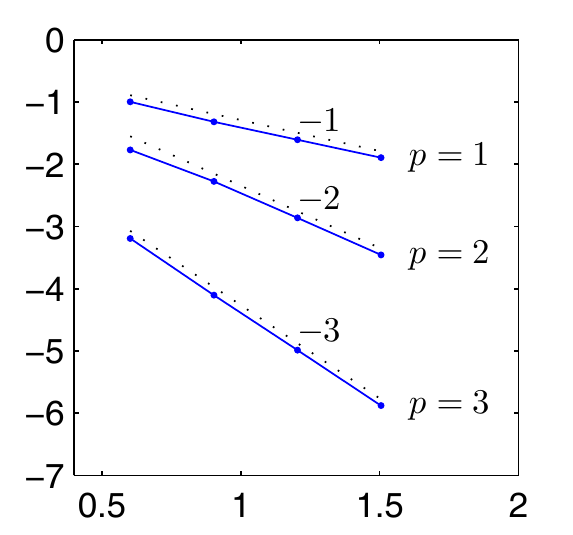}
\includegraphics[scale=1]{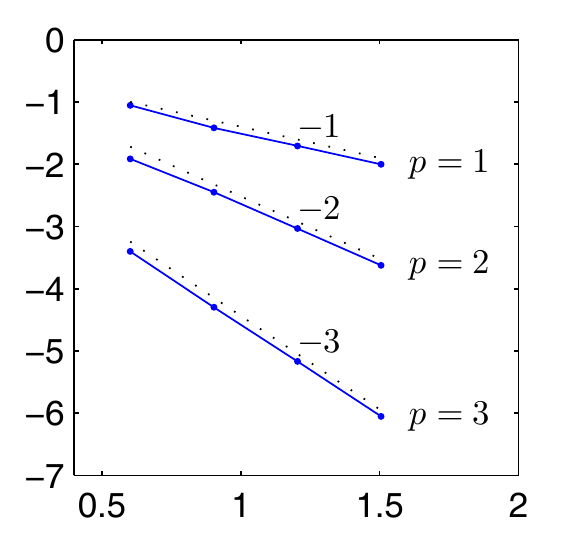}
\caption{Error Reduction of $\log_{10}\left(|u-u_h|_{1,\Omega_1\cup\Omega_2}/|u|_{1,\Omega_1\cup\Omega_2}\right)$ to $\log_{10}(1/h)$. \newline
Left: $\alpha_1=10, \alpha_2=1$. Right: $\alpha_1=1, \alpha_2=10$.
}\label{fe1}
\end{figure}

\begin{figure}[htp]
\centering
\includegraphics[scale=1]{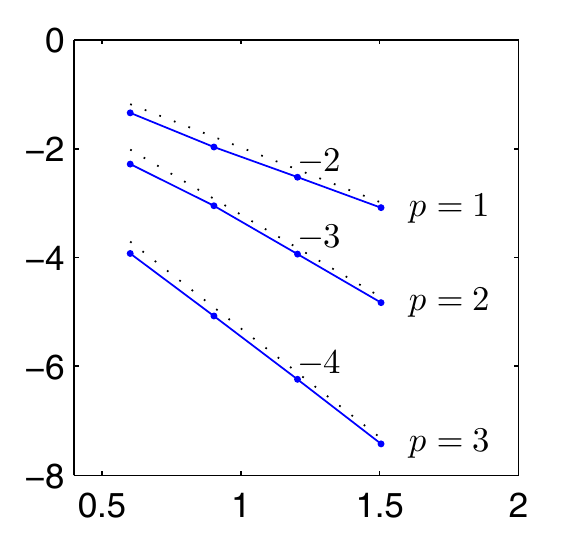}
\includegraphics[scale=1]{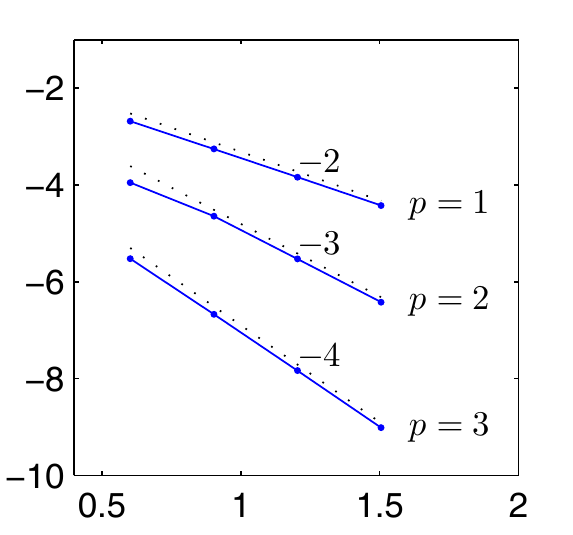}
\caption{Error Reduction of $\log_{10}\left(\|u-u_h\|_{L^2(\Omega)}/\|u\|_{L^2(\Omega)}\right)$ to $\log_{10}(1/h)$. \newline
Left: $\alpha_1=10, \alpha_2=1$. Right: $\alpha_1=1, \alpha_2=10$.
}\label{fe2}
\end{figure}
Figure \ref{fe1} (left) plots $\log_{10}\left(|u-u_h|_{1,\Omega_1\cup\Omega_2}/|u|_{1,\Omega_1\cup\Omega_2}\right)$ versus $\log_{10}(1/h)$ with $h=1/4, 1/8,$ $ 1/16, 1/32$ for $\alpha_1=10$, $ \alpha_2=1$, and $p=1,2,3$, respectively. Figure \ref{fe1} (right) gives corresponding plots for $\alpha_1=1$, $ \alpha_2=10$. The dotted lines give reference lines of slopes $-1, -2$, and $-3$, respectively. Figure \ref{fe2} shows the results on the relative errors in the $L^2$-norm for both choices of the coefficient $\alpha(x)$. The convergence rate of $O(h^p)$ and $O(h^{p+1})$ are observed, respectively, in these cases, which confirms our theoretical results.


\begin{thebibliography}{11}

\bibitem{arnold82} D. N. Arnold, An interior penalty finite element method with discontinuous elements, SIAM J. Numer. Anal. 19 (1982), 742--760.

\bibitem{b70} I. Babu{\v{s}}ka, The finite element method for elliptic equations with discontinuous coefficients,
Computing 5 (1970), 207--213.

\bibitem{barrett87} J. W. Barrett and C. M. Elliott, Fitted and unfitted finite-element methods for elliptic
equations with smooth interfaces, IMA J. Numer. Anal. 7 (1987), 283--300.

\bibitem{bassi97}
F. Bassi, S. Rebay, G. Mariotti, S. Pedinotti and M. Savini, A high-order accurate discontinuous finite element method for inviscid and viscous turbomachinery flows, in Proceedings of 2nd European Conference on Turbomachinery, Fluid Dynamics and Thermodynamics, R. Decuypere and G. Dibelius, eds., Technologisch Instituut, Antwerpen, Belgium, 1997, 99--109.

\bibitem{belytschko99}
T. Belytschko and T. Black, Elastic crack growth in finite elements with minimal remeshing, Int. J. Numer. Meth. Eng. 45 (1999), 601--620.

\bibitem{bramble96}
J. H. Bramble and J. T. King, A finite element method for interface problems in domains with smooth boundaries and interfaces, Adv. Comput. Math. 6 (1996), 109-138.

\bibitem{burman16}
E. Burman, J. Guzman, M. A. Sanchez and M. Sarkis, Robust flux error estimation of Nitsche's method for high contrast interface problems, arXiv preprint arXiv:1602.00603 (2016).

\bibitem{bh12}
E. Burman and P. Hansbo, Fictitious domain finite element methods using cut elements: II. A stabilized Nitsche method, Appl. Num. Math. 62 (2012), 328--341.

\bibitem{chen15}
Z. Chen, Z. Wu and Y. Xiao, An adaptive immersed finite element method with arbitrary Lagrangian-Eulerian scheme for parabolic equations in time variable domains, Int. J. Numer. Anal. Model. 12 (2015), 567--591.

\bibitem{chen_xiao}
Z. Chen, Y. Xiao and L. Zhang, The adaptive immersed interface finite element method for elliptic and Maxwell interface problems, J. Comput. Phys. 228 (2009), 5000--5019.

\bibitem{chen98}
Z. Chen and J. Zou, Finite element methods and their convergence for elliptic and parabolic interface problems. Numer. Math. 79 (1998), 175--202.

\bibitem{cgh08}
C.-C. Chu, I. G. Graham and T. Y. Hou, A new multiscale finite element method for high-contrast elliptic interface problems, Math. Comp. 79 (2010), 1915--1955.

\bibitem{ciarlet78} P. G. Ciarlet, The Finite Element Method for Elliptic Problems, North-Holland, Amsterdam, 1978.

\bibitem{fries10}
T.-P. Fries and T. Belytschko, The extended/generalized finite element method: An overview of the method and its applications, Int. J. Numer. Meth. Eng. 84 (2010), 253--304.

\bibitem{gll08}
Y. Gong, B. Li and Z. Li, Immersed-interface finite-element methods for elliptic interface problems with non-homogeneous jump conditions, SIAM J. Numer.
Anal. 46 (2008), 472--495.

\bibitem{gruber83}
P. M. Gruber, Approximation of Convex Bodies, Convexity and its Applications. Birkh\"auser Basel, 1983, 131--162.

\bibitem{hansbo02}
A. Hansbo and P. Hansbo, An unfitted finite element method, based on Nitsche's method for elliptic interface problems, Comput. Methods Appl. Mech. Eng. 191 (2002), 5537--5552.

\bibitem{hansbo05}
P. Hansbo, Nitsche's method for interface problems in computational mechanics. GAMM-Mitt. 47 (2005), 183--206.

\bibitem{ll94}
R. LeVeque and Z. Li, The immersed interface method for elliptic equations with discontinuous coefficients and singular sources, SIAM J. Numer. Anal. 31 (1994), 1019--1044.

\bibitem{I06}
Z. Li and K. Ito, The Immersed Interface Method: Numerical Solutions of PDEs Involving Interfaces and Irregular Domains, SIAM, Philadephia, 2006.

\bibitem{li10}
J. Li, J.M. Melenk, B. Wohlmuth and J. Zou, Optimal a priori estimates for higher order finite elements for elliptic interface problems, Appl. Numer. Math. 60 (2010), 19--37.

\bibitem{llw03}
Z. Li, T. Lin and X. Wu, New Cartesian grid methods for interface problems using the finite element formulation, Numer. Math. 96 (2003), 61--98.

\bibitem{lfk00}
X. Liu, R. P. Fedkiw and M. Kang, A boundary condition capturing method for Poisson's equation on irregular domains, J. Comput. Phys. 160 (2000), 151--178.

\bibitem{massjung12}
R. Massjung, An unfitted discontinuous Galerkin method applied to elliptic interface problems, SIAM J. Numer. Anal. 50(6), 2012, 3134-3162.

\bibitem{moes99}
N. Mo\"es, J. Dolbow and T. Belytschko, A finite element method for crack growth without remeshing, Int. J. Numer. Meth. Eng. 46 (1999), 131--150.

\bibitem{nitsche70}
J. Nitsche. Uber ein Variationsprinzip zur Losung von Dirichlet-Problemen bei Verwendung von Teilraumen, die keinen Randbedingungen unterworfen sind. Abh. Math. Sem. Univ. Hamburg 36 (1970), 9--15.

\bibitem{peskin77}
C. S. Peskin, Numerical analysis of blood flow in the heart, J. Comput. Phys. 25 (1977), 220--252.

\bibitem{rwg99} B. Riviere, M. F. Wheeler and V. Girault, Improved energy estimates for interior penalty, constrained and discontinuous Galerkin methods for elliptic problems. I, Comput. Geosci.
3 (1999), 337--360.

\bibitem{schwab98}
Ch. Schwab, $P$- and $hp$-Finite Element Methods. Oxford University Press, 1998.

\bibitem{schwarzkopf98}
O. Schwarzkopf, U. Fuchs, G. Rote, et al. Approximation of convex figures by pairs of rectangles, Comput. Geom. 10(1998), 77--87.

\bibitem{wang15}	
F. Wang and S. Zhang, Optimal quadratic Nitsche extended finite element method for solving interface problems, submitted (2015).

\bibitem{w78}
M. F. Wheeler, An elliptic collocation-finite element method with interior penalties, SIAM J.
Numer. Anal. 15 (1978), 152--161.

\bibitem{wu2010}
H. Wu and Y. Xiao, An unfitted $ hp $-interface penalty finite element method for elliptic interface problems, arXiv preprint arXiv:1007.2893 (2010).

\bibitem{xu82}
J. Xu, Estimate of the convergence rate of finite element solutions to elliptic equations of second order with discontinuous coefficients, Natural Science Journal of Xiangtan University, 1 (1982), 1--5. (in Chinese)

\bibitem{xu13}
J. Xu, Estimate of the convergence rate of finite element solutions to elliptic equations of second order with discontinuous coefficients, arXiv:1311.4178.

\end{thebibliography}
\end{document}